\theoremstyle{plain}
\newtheorem{theorem}{Theorem}[section]
\newtheorem{definition}[theorem]{Definition}
\newtheorem{lemma}[theorem]{Lemma}
\newtheorem{corollary}[theorem]{Corollary}
\newtheorem{proposition}[theorem]{Proposition}
\newtheorem{note}[theorem]{Note}
\newtheorem*{plan}{Plan of proof}
\newcommand{\Gm}{\mathbb{G}_m}
\newcommand{\bb}{\mathbb}
\newcommand{\cob}{\operatorname{cob}}
\newcommand{\Forget}{\operatorname{Forget}}
\newcommand{\pt}{\operatorname{pt}}
\newcommand{\id}{\operatorname{id}}
\newcommand{\Tr}{\operatorname{Tr}}
\newcommand{\Hcob}{\operatorname{H^0-cob}}
\def\geq{\geqslant}
\begin{document}
\title{Cobordism-framed correspondences and the Milnor $K$-theory}
\author{Aleksei Tsybyshev }
\address{St. Petersburg Branch of Steklov Mathematical Institute, Fontanka
27, St. Petersburg, 191023, Russia}
\address{Chebyshev laboratory, Saint Petersburg state university
14th Line 29B, Vasilyevsky Island, St.Petersburg 199178, Russia}
\email{emperortsy@gmail.com}

\begin{abstract}
In this paper, we compute the $0$th cohomology group of a complex of groups of cobordism-framed correspondences. In the case of ordinary framed correspondences, an analogous computation has been completed by A. Neshitov in his paper "Framed correspondences and the Milnor---Witt $K$-theory". 

Neshitov's result is, at the same time, a computation of the homotopy groups $\pi_{i,i}(S^0)(Spec(k)),$ and the present work could be used in the future as basis for computing homotopy groups $\pi_{i,i}(MGL_{\bullet})(Spec(k))$ of the spectrum $MGL_{\bullet}.$
\end{abstract}
\maketitle{}

\section{Introduction}
 The theory of framed correspondences and framed transfers was conceived by V. Voevodsky in~\cite{Voev}. In the course of developing that theory, G. Garkusha and I. Panin in~\cite{GP} defined and studied framed motives of algebraic varieties. One application of that theory is an explicit fibrant replacement of the suspension bispectrum $\Sigma_{S^1}^{\infty}\Sigma_{\Gm}^{\infty}X_{+}$ of a smooth variety $X\in Sm_k$.

as a corollary, one can reduce the computation of motivic homotopy groups
$\pi^{n,n}(\Sigma_{S^1}^{\infty}\Sigma_{\Gm}^{\infty}S^0)(k)$ to the computation of the $0$th cohomology group
$H^0(\mathbb{Z}F(\Delta_k^{\bullet},\Gm^{\wedge n}))$ of an explicit simplicial abelian group $\mathbb{Z}F(\Delta^{\bullet}_k,\Gm^{\wedge
n})$ (see~\cite[следствие 10.7]{GP}). That computation was completed by Neshitov in his paper~\cite{Milnor-Witt}.

It appears that, using~\cite{GarNesh}, one can computa an analogous motivic homotopy group $\pi^{n,n}(MGL_{\bullet})(k)$ as the $0$th cohomology group of the complex $\mathbb{Z}F^{\cob}(\Delta_k^{\bullet},\Gm^{\wedge n}).$ The groups $\mathbb{Z}F^{\cob}$ are defined in Section~\ref{def}. The goal of the present paper is to calculate these $0$th cohomology groups, or, more precisely, to prove Theorem~\ref{main}, in which we present an explicit isomorphism of graded rings.
$$
\oplus \sigma_m\colon  \bigoplus K_m^M(k) \to \bigoplus H^0\big(C_* \mathbb{Z}F^{\cob}(\pt,\mathbb{G}_m^{\wedge m})\big).
$$

The author thanks professor Panin for presenting the problem to him, and for advice on the properties of Milnor $K$-groups on curves.

\section{Definition of cobordism-framed correspondences and statement of the main result}\label{def}

We begin with repeating the definitions of some basic objects.

\begin{definition}
Let $Z \subseteq X$ be a closed subscheme. An \'etale neighbourhood of $Z$ in $X$ is an \'etale morphism $e\colon  W \to X,$ such that $W \times_X Z \to Z$  is an isomorphism. 

In that context, sometimes the scheme $W$ itself, in that case the morphism $e$ is implicit.
\end{definition}

\begin{definition}
Let $e,e'$ be two \'etale neighbourhoods of $Z$ in $X.$ The neighbourhood $e'$ is called a \textbf{refinement} of $e$, if $e'$ factors through $e$, i.e. there exists a morphism $f$ (which is by necessity \'etale), such that $e'=e \circ f.$
\end{definition}

The following is a basic definition of the theory of framed correspondences, and was originally given by Voevodsky in~\cite[раздел 2]{Voev} as a ``globally framed correspondence''.

\begin{definition}
Let $X,Y$ be schemes. An explicit framed correspondence 
 of level $n$ is comprised of the following data:
\begin{itemize}
    \item {a closed subset $ Z \subseteq \mathbb{A}^n_X$, finite over $X$, called the \textbf{support} of the correspondence};
    \item {an \'etale neighbourhood $W \supset Z$ of $Z$ in $\mathbb{A}^n_X$};
    \item {a morphism of schemes $\phi \colon  W \to \mathbb{A}^n$,  such that the subset $Z \subseteq W$ is the preimage of $0$ under $\phi$};
    \item {a morphism $g\colon  W \to Y$.}
\end{itemize}

Such an explicit framed correspondence is denoted by a tuple $(Z,W,\phi,g)$.

Two explicit framed correspondences of the same level $n$ are called equivalent: 
$$
(Z,W,\phi,g) \sim (Z,W',\phi ',g'),
$$
 if they have the same support $Z$ and there exists a refinement $W''$ of both $W$ and $W'$ (implying morphisms $i\colon  W'' \to W$ and $i'\colon W'' \to W'$ over $\mathbb{A}^n_X$ ), such that

$$
\phi \circ i= \phi' \circ i',
$$

$$
 g \circ i = g' \circ i'.
$$

The set  $ Fr_n(X,Y) $ of level $n$ framed correspondences from $X$ to $Y$ is the set of equivalence classes with respect to that equivalence relation.
By abuse of notation, the class of the explicit framed correspondence $(Z,W,\phi,g)$ is also denoted by $(Z,W,\phi,g).$ Single letters, such as $c,$ are also used to denote framed correspondences.
\end{definition}

\begin{note}
The set $ Fr_n(X,Y) $ is naturally pointed with the correspondence $\varnothing,$ the only equivalence class of correspondences with empty support.
\end{note}

\begin{note}
Based on that definition Voevodsky in~\cite{Voev} defined pointed sets $Fr(X,Y)$ of stable framed correspondences.
On the same basis, in~\cite[Definitions 2.8, 8.3, 8.5]{GP} the groups of linear and stable linear framed correspondences 
$\bb{Z}F_n(X,Y)$, $\bb{Z}F(X,Y)$ cоответственно. 
All of these definitions are repeated in~\cite[Section 1]{Milnor-Witt}. 
These objects are used in the present paper.
\end{note}

We now give an analogous definition, central for the present paper.
\begin{definition}\label{cob_unstable}

Let $X$ be a smooth variety over the field $k$ of characteristic $0$, and let $Y$ be a presheaf on the category of smooth varieties over $k.$ 

 An \textbf{explicit cobordism-framed correspondence} of level $(n,N)$ is comprised of the following data:
\begin{itemize}
    \item {A closed subset $ Z \subseteq \mathbb{A}^n_X$ finite over $X$}
    \item {An \'etale neighbourgood $W \supset Z$ in $\mathbb{A}^n_X$ }
    \item {A regular map $\phi : W \to \tau_{n,N}$, where $\tau_{n,N}$ is the total space of the tautological bundle $\tau_{n,N}^{Sh}$ over the Grassmann variety $Gr_{n,N}$; such that the subset $Z \subseteq W$ is the preimage of the zero-section under $\phi$ }
    \item {A morphism $g: W \to Y,$ i.e. $g \in \Gamma(W,Y)$}
\end{itemize}

Such an explicit cobordism-framed correspondence is denoted by a tuple $(Z,W,\phi,g)$.

Two explicit cobordism-framed correspondences of the same level $(n,N)$ are called equivalent: 
$$
(Z,W,\phi,g) \sim (Z,W',\phi ',g'),
$$
 if they have the same support $Z$ and there exists a refinement $W''$ of both $W$ and $W'$ (implying morphisms $i\colon  W'' \to W$ and $i'\colon W'' \to W'$ over $\mathbb{A}^n_X$ ), such that

$$
\phi \circ i= \phi' \circ i',
$$

$$
 g \circ i = g' \circ i'.
$$

The set   $ Fr_{n,N}^{cob}(X,Y) $  of level $(n$ framed correspondences from $X$ to $Y$ is the set of equivalence classes with respect to that equivalence relation.
By abuse of notation, the class of the explicit framed correspondence $(Z,W,\phi,g)$ is also denoted by $(Z,W,\phi,g).$ Single letters, such as $c,$ are also used to denote framed correspondences.

The set $ Fr_{n,N}^{cob}(X,Y) $ of cobordism-framed correspondences of level $(n,N)$ from $X$ to $Y$  is the set of equivalence classes with respect to that equivalence relation.
By abuse of notation, the class of the explicit cobordism-framed correspondence $(Z,W,\phi,g)$ is also denoted by $(Z,W,\phi,g).$ Single letters, such as $c,$ are also used to denote cobordism-framed correspondences.
\end{definition}
\begin{note}
While regular maps $X \to Gr_{n,N}$ correspond to epimorphisms $\mathcal{O}_X^N \to E$, where $E$ is a locally free sheaf of rank $n$, regular maps $X \to \tau_{n,N}$ correspond to the same data (since there is a canonical morphism $\tau_{n,N} \to Gr_{n,N}$), plus a choice of a section $s \in \Gamma(X,E).$   
\end{note}
\begin{note}
The notation $(Z,W,\phi,g)$ is brief but implicit, because "$W \supset Z$ being an \'etale neighbourhood in $\mathbb{A}^n_X$" implies an \'etale morphism $W \to \mathbb{A}^n_X,$ and a closed embedding $Z \to W$ making the triangle commute:
\[
\begin{tikzcd}
Z \arrow[d,hook] \arrow[r,hook] & W \arrow[dl] \\
  \mathbb{A}^n_X
\end{tikzcd}.
\]
\end{note}
\begin{definition}
The set $ Fr^{cob}(X,Y) $ of stable cobordism-framed correspondences from $X$ to $Y$  is the limit of $ Fr_{n,n+N}^{cob}(X,Y) $ when $N \to \infty$ and $n \to \infty$ along the following maps:

    along N: $\tau_{n,n+N} \to \tau_{n,n+N+1}$ is given on the represented functors by the natural transformation $\left( \mathcal{O}^{n+N} \twoheadrightarrow E , s \right) \mapsto \left( \mathcal{O}^{n+N+1} \twoheadrightarrow E, s \right) $, where the resulting epimorphism is zero on the last coordinate.
		
    Along n: $(Z,W,\phi,g) \mapsto (Z',W',\phi',g'),$ where: 
    \begin{itemize}
        \item  {The subset $Z'$ is taken to be the image of $Z \hookrightarrow \mathbb{A}^n \xhookrightarrow{i_2} \mathbb{A}^1 \times \mathbb{A}^n \simeq \mathbb{A}^{1+n},$ where $i_2$ is the embedding as the second factor, equaling zero on the first coordinate.}
        \item {The neighbourhood $W'$ is $\mathbb{A}^1 \times W$.}
        \item {The map $\phi'$, supposing $\phi$ corresponds to $\mathcal{O}^{n+N} \twoheadrightarrow E$ and $s \in \Gamma(X,E)$, is given by $\mathcal{O}^{1+n+N} \simeq \mathcal{O} \oplus \mathcal{O}^{n+N} \twoheadrightarrow \mathcal{O} \oplus E$} and the section $(x_{-n-1},s)$, where $x_{-n-1}$ is the coordinate function $pr_1 : \mathbb{A}^1 \times W \to \mathbb{A}^1$
        \item {The morphism $g'$ is $\mathbb{A}^1 \times W \to W \to Y$}

        \end{itemize}
\end{definition}
\begin{note}
One has to check that the stabilization maps in this definition are well-defined: $Z$ is set-theoretically the preimage of the zero-section because one of the coordinate functions cuts out $W$ in $\mathbb{A}^1 \times W$, and the rest of them cut out $Z$ in $W$. Also, to simultaneously pass to the limit along $n$ and $N$, one has to check that the stabilization maps commute with each other. This is achieved by noticing that they add coordinates on different sides --- one on the left, the other on the right.
\end{note}
\begin{definition}
The group  $\mathbb{Z}F^{cob}(X,Y)$ of linear stable cobordism-framed correspondences from $X$ to $Y$ is the abelian group with generators $Fr^{cob}(X,Y)$ and relations 

\[\left[(Z,W,\phi,g)\right] +\left[(Z',W',\phi',g')\right] = \left[(Z \amalg Z', W \amalg W', \phi \amalg \phi', g \amalg g')\right].\]

 The representatives here are taken on some finite level, in one particular $Fr_{n,N}^{cob}(X,Y),$ and the relation only makes sense if $Z \cap Z' = \emptyset.$
\end{definition}

\begin{note}
$\mathbb{Z}F_{n,n+N}^{\cob}(X,Y)$ is also a free abelian group with basis comprised of the correspondences with connected support.
\end{note}

\begin{definition}
The group $\mathbb{Z}F^{\cob}(X,Y)$ of stable linear cobordism-framed correspondences from $X$ to $Y$ is the inductive limit of the groups $\mathbb{Z}F_{n,n+N}^{\cob}(X,Y)$ as $N \to \infty$ and $n \to \infty.$ 
\end{definition}

Taking $X$ in the definition above to be the members of a cosimplicial object $\Delta_k^{\bullet} \times X$, we get a complex $C_* \mathbb{Z}F^{\cob}( X, Y)$, which we denote by $C_* \mathbb{Z}F^{\cob}(X,Y).$

\begin{lemma}
Inside the complex $C_* \mathbb{Z}F^{\cob}(X,\Gm^{\{1 \dots m\}})$ the sum of the subcomplexes 
$$
C_* \mathbb{Z}F^{\cob}\big(X,\Gm^{\{1 \dots \widehat{i} \dots m\}}\big)
$$
 can be split out as a direct summand.
\end{lemma}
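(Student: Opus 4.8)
The plan is to realize the subcomplex as the image of an explicit idempotent chain endomorphism, built out of the coordinate retractions of the torus, and then split it off via an inclusion--exclusion idempotent. For each $i \in \{1,\dots,m\}$ let $p_i \colon \Gm^{\{1\dots m\}} \to \Gm^{\{1\dots\widehat{i}\dots m\}}$ be the projection forgetting the $i$-th coordinate, and let $j_i$ be the inclusion sending the $i$-th coordinate to the base point $1$. Their composite $r_i = j_i \circ p_i \colon \Gm^{\{1\dots m\}} \to \Gm^{\{1\dots m\}}$ is the endomorphism that resets the $i$-th coordinate to $1$. Since $\mathbb{Z}F^{\cob}(X,-)$ is functorial in its second argument (a correspondence $(Z,W,\phi,g)$ is carried to $(Z,W,\phi,r_i\circ g)$, and this respects the equivalence relation, the additive presentation, and the stabilization maps, while being independent of the cosimplicial variable $X$), each $r_i$ induces a chain endomorphism $e_i = (r_i)_*$ of $M := C_* \mathbb{Z}F^{\cob}(X,\Gm^{\{1\dots m\}})$.

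First I would record the formal properties of these operators. Because $r_i^2 = r_i$, each $e_i$ is idempotent; because the $r_i$ modify distinct coordinates, $r_i \circ r_j = r_j \circ r_i$, so the $e_i$ pairwise commute. Moreover $p_i \circ j_i = \id$ gives $(p_i)_*(j_i)_* = \id$, so $(j_i)_*$ is a split monomorphism and identifies $C_*\mathbb{Z}F^{\cob}(X,\Gm^{\{1\dots\widehat{i}\dots m\}})$ with its image in $M$. Finally $r_i = j_i\circ p_i$ together with $r_i\circ j_i = j_i$ yields $\operatorname{im}(e_i) = \operatorname{im}((j_i)_*)$. Hence the subcomplex whose summand status we must establish is exactly $\sum_{i} \operatorname{im}(e_i)$.

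The decomposition then follows from a purely formal computation with commuting idempotents. I would set
\[
P \;=\; \id - \prod_{i=1}^{m}(\id - e_i) \;=\; \sum_{\varnothing \neq S \subseteq \{1,\dots,m\}} (-1)^{|S|+1} \prod_{i\in S} e_i ,
\]
which is a chain endomorphism of $M$. Since the factors $\id - e_i$ are commuting idempotents, $\prod_i(\id-e_i)$ is idempotent and therefore so is $P$. Using $e_i(\id-e_i)=0$ one gets $P e_i = e_i$, whence $\operatorname{im}(e_i)\subseteq \operatorname{im}(P)$ for every $i$ and so $\sum_i\operatorname{im}(e_i)\subseteq\operatorname{im}(P)$; conversely each term $\prod_{i\in S}e_i$ in the inclusion--exclusion expansion has image contained in $\operatorname{im}(e_{\min S})\subseteq\sum_i\operatorname{im}(e_i)$, giving the reverse inclusion. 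Thus $P$ is an idempotent chain map with image precisely the subcomplex $\sum_i C_*\mathbb{Z}F^{\cob}(X,\Gm^{\{1\dots\widehat{i}\dots m\}})$, and the idempotent splitting $M = \operatorname{im}(P)\oplus\ker(P)$, with $\ker(P)=\operatorname{im}\big(\prod_i(\id-e_i)\big)$, exhibits it as a direct summand.

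I do not expect a serious obstacle here: the substantive content is the commuting-idempotents identity above, which is elementary once the operators are in place. The only point demanding genuine care is the verification that each $e_i$ is a well-defined endomorphism of the complex, i.e. that postcomposition with $r_i$ descends through the refinement equivalence, is additive with respect to the disjoint-union relation defining $\mathbb{Z}F^{\cob}$, commutes with the stabilization maps along $n$ and $N$, and is compatible with the faces and degeneracies of $\Delta_k^{\bullet}\times X$; all of these are consequences of the functoriality of $\mathbb{Z}F^{\cob}(X,-)$ in the target and require no computation beyond bookkeeping. It is worth remarking that the complementary summand $\ker(P)$ is the natural candidate model for $C_*\mathbb{Z}F^{\cob}(X,\Gm^{\wedge m})$, which is why this splitting is the right preparatory step for Theorem~\ref{main}.
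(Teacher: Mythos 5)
Your proposal is correct and follows essentially the same route as the paper: the same coordinate-resetting idempotents $e_i$, the same inclusion--exclusion idempotent $\id - \prod_{i=1}^m(\id - e_i)$ splitting out the sum of the subcomplexes, with $\prod_{i=1}^m(\id - e_i)$ cutting out the complement. You merely spell out in more detail the verifications (functoriality of postcomposition, and the identification $\operatorname{im}(P)=\sum_i\operatorname{im}(e_i)$) that the paper leaves implicit.
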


\begin{proof}
Denote the idempotent correspondence (which is actually a map) 
$$
{\Gm \to \pt \xrightarrow{1} \Gm}
$$
 as $e$. On $\Gm^{\times m}$ there are $m$ idempotent correspondences, one for each factor. Denote them as $e_1, \dots, e_m.$ Note that the $e_i$ commute pairwise, since the product of any collection $\{e_i,i \in I\}$ is a result of adding factors with the identity morphism to the morphism 
$$
\Gm^{I} \to \pt \xrightarrow{(1 \dots 1)} \Gm^{I}.
$$
 In particular, the products of the~$e_i$ are also idempotents. The same is true for $(1-e_i).$  The composition with these idempotents gives us corresponding idempotent maps on 
$$
C_* \mathbb{Z}F^{\cob}\big(X,\Gm^{\{1 \dots m\}}\big).
$$ 

The idempotent 
$$
1-\prod_{i=1}^m (1-e_i)
$$
 splits out as a direct summand the sum of subcomplexes $C_* \mathbb{Z}F^{\cob}(X,\Gm^{\{1 \dots \widehat{i} \dots m\}}),$ and, respectively, the idempotent 
$$
\prod_{i=1}^m (1-e_i)
$$
 splits out its complement.
\end{proof}

\begin{definition}
Following~\cite{SV} and~\cite{Milnor-Witt}, the direct complement in
$$
C_* \mathbb{Z}F^{\cob}\big(X,\Gm^{\{1 \dots m\}}\big)
$$
 of the sum of the subcomplexes 
$$
C_* \mathbb{Z}F^{\cob}\big(X,\Gm^{\{1 \dots \widehat{i} \dots m\}}\big)
$$
 (split out by the idempotent $\prod_{i=1}^m (1-e_i)$ from the proof above) is denoted by $C_* \mathbb{Z}F^{\cob}(X,\Gm^{\wedge m}).$
\end{definition}

 We consider the case when $X=\pt$ is one rational point in more detail: the main result of the paper is the identification in this case of the $0$th group of the complex above with the $m$th $K$\nobreakdash-group of the field $k.$  (Recall that Neshitov in~\cite{Milnor-Witt} identified the $0$th cohomology group of the complex $\bb ZF(\Delta^{\bullet}_k; \Gm^{\wedge m})$ with the Milnor-Witt $K$-group $K^{MW}_m(k)$.)

To state the claim more preciselym we introduce an external product structure on the groups $H^0(C_* \mathbb{Z}F^{\cob}(X,Y)).$ The construction is analogous to \cite[Section 3]{Milnor-Witt}, but it uses the direct summation map. 

\begin{definition} \label{d:external_product}
Let 
\begin{align*}
c&=(Z,W,\phi,g) \in Fr_{n,N}^{\cob}(X,Y),
\\
c'&=(Z',W',\phi',g') \in Fr_{n',N'}^{\cob}(X',Y').
\end{align*}
Then their external product is the class of the explicit cobordism-framed correspondence
$$
c \times\! c'\! =\!\big((Z \!\times\! Z'\!, W \!\times\! W'\!, i_{((n,N),(n'\!,N'))}) \circ (\phi \!\times\! \phi'),g \!\times\! g' \!\in\! Fr_{n\!+\!n',N\!+\!N'}^{\cob}(X \!\times\! X'\!,Y \!\times\! Y')\big),
$$
where the morphism 
$ i_{((n,N),(n',N'))})\colon  \tau_{n,N} \times \tau_{n',N'} \to \tau_{n+n',N+N'}$ is defined on the represented functors by taking the direct sum as follows:
$$
\begin{tikzcd}
\tau_{n,N} \times \tau_{n',N'} \arrow[d] 
&  \big(( p\colon  \mathcal{O}^{n+N} \twoheadrightarrow E , s \big), \big( p'\colon \mathcal{O}^{n'+N'} \twoheadrightarrow E' , s' ) \big) \arrow[d,mapsto] \\
\tau_{n+n',N+N'}
&  \big( p \oplus p' \circ T_{n,N,n',N'} \colon  \mathcal{O}^{n+n'+N+N'} \twoheadrightarrow E \oplus E', (s,s' )  \big).
\end{tikzcd}
$$
Here
$$
\begin{tikzcd}
T_{n,N,n',N'}\colon \\
 \mathcal{O}^{n+N+n'+N'} \arrow[d,"\simeq"] \\
\mathcal{O}^{n} \oplus \mathcal{O}^{N} \oplus \mathcal{O}^{n'} \oplus \mathcal{O}^{N'} \arrow[d,"\simeq"] \\ 
\mathcal{O}^{n} \oplus \mathcal{O}^{n'} \oplus \mathcal{O}^{N} \oplus \mathcal{O}^{N'} \arrow[d,"\simeq"] \\ 
\mathcal{O}^{n+n'+N+N'}\\
\end{tikzcd}
$$
swaps the places of the two direct summands.

\end{definition}

\begin{note}
This  multiplication operation does not respect the stabilisation maps, so it does not give rise to an external multiplication on stable cobordism-framed correspondences. However, it does respect the stabilisation maps up to homotopy, so the following lemma is true.
\end{note}

\begin{lemma}
The external multiplication of cobordism-framed correspondences gives rise to a well-defined external multiplication operation 
$$
H^0(C_* \mathbb{Z}F^{\cob}(X,Y)) \otimes H^0\big(C_* \mathbb{Z}F^{\cob}(X',Y')\big) \to H^0\big(C_* \mathbb{Z}F^{\cob}(X \times X',Y \times Y')\big).
$$

\end{lemma}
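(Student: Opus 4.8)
The plan is to lift the level-wise external product of Definition~\ref{d:external_product} to a chain map of the complexes $C_* \mathbb{Z}F^{\cob}$ and then pass to $H^0$, following~\cite[Section 3]{Milnor-Witt} and paying attention to the reordering $T_{n,N,n',N'}$, which is the only feature absent from the ordinary framed setting.

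First I would record that the external product is bi-additive. Since $(Z_1 \amalg Z_2) \times Z' = (Z_1 \times Z') \amalg (Z_2 \times Z')$ and the remaining data $(W,\phi,g)$ distribute over disjoint unions, the product $c \times c'$ respects the defining relation of $\mathbb{Z}F^{\cob}$ in each argument; hence at every finite level it descends to a bilinear map $\mathbb{Z}F_{n,N}^{\cob}(X,Y) \otimes \mathbb{Z}F_{n',N'}^{\cob}(X',Y') \to \mathbb{Z}F_{n+n',N+N'}^{\cob}(X \times X', Y \times Y')$, natural in $X$ and $X'$. Feeding in $\Delta^p_k \times X$ and $\Delta^q_k \times X'$ and using this naturality, I obtain a map of bisimplicial abelian groups from $(p,q) \mapsto C_p \mathbb{Z}F^{\cob}(X,Y) \otimes C_q \mathbb{Z}F^{\cob}(X',Y')$ to $(p,q) \mapsto \mathbb{Z}F^{\cob}(\Delta^p_k \times \Delta^q_k \times X \times X', Y \times Y')$. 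Composing the Eilenberg--Zilber shuffle map into the diagonal with restriction along the diagonal $\Delta^{\bullet}_k \hookrightarrow \Delta^{\bullet}_k \times \Delta^{\bullet}_k$ yields a chain map $C_* \mathbb{Z}F^{\cob}(X,Y) \otimes C_* \mathbb{Z}F^{\cob}(X',Y') \to C_* \mathbb{Z}F^{\cob}(X \times X', Y \times Y')$. Since a chain map sends boundaries to boundaries, it induces a map on $H^0$, and the degree-$0$ Künneth identification $H^0(C \otimes C') \cong H^0(C) \otimes H^0(C')$ (right exactness of $\otimes$) turns it into the asserted pairing --- provided everything is compatible with the passage to the stable (colimit) groups.

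This last proviso is the crux, and it is exactly the up-to-homotopy compatibility flagged in the Note preceding the lemma. The groups $\mathbb{Z}F^{\cob}$ are filtered colimits over the levels $(n,N)$, so $H^0$ of the stable complex is the colimit of the finite-level $H^0$'s, and it suffices to show that the finite-level pairings commute, after applying $H^0$, with the stabilization maps. Comparing $\sigma(c) \times c'$ with $\sigma(c \times c')$ (stabilizing along $N$, and symmetrically along $n$), the two explicit correspondences differ only by the position of the newly created zero coordinate inside the trivial summand $\mathcal{O}^{M}$, that is, by a permutation matrix $P \in GL_M(k)$ acting functorially on $\tau_{n+n',M}$ and $\Gr_{n+n',M}$. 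I would factor this $P$ as a product $P = Q \cdot D$, where $Q \in SL_M$ is assembled from $SL_2$-rotation blocks $\left(\begin{smallmatrix} 0 & 1 \\ -1 & 0 \end{smallmatrix}\right)$ (each a product of elementary matrices, hence joined to the identity by a chain of $\mathbb{A}^1$-families, and $\mathbb{A}^1 = \Delta^1_k$ gives an actual simplicial homotopy in $C_*$), and $D$ is diagonal, acting as $\pm 1$ on the moved zero coordinate and as the identity elsewhere. Because the stabilized epimorphism kills that coordinate, and the section lives in $E \oplus E'$ rather than in $\mathcal{O}^{M}$, the matrix $D$ leaves the correspondence $(Z,W,\phi,g)$ unchanged. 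Hence $P \cdot (c \times c') \sim c \times c'$ in $H^0$, the finite-level pairings are compatible with the colimit structure maps (which induce the identity on $H^0$), and the pairing descends to the claimed well-defined external multiplication on the stable complexes.

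I expect the factorization $P = Q \cdot D$, and in particular the observation that the residual sign $D$ is supported on a zero coordinate, to be the decisive point: it is precisely what kills the orientation signs that survive in Neshitov's Milnor--Witt computation, and is the reason the cobordism-framed answer will ultimately be plain Milnor $K$-theory.
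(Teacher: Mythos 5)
Your proof is correct and takes essentially the approach the paper has in mind: the paper gives no written proof of this lemma, only the preceding Note asserting that the external product respects the stabilisation maps up to homotopy, and your argument is precisely a verification of that assertion. Your key checks --- that the discrepancy between $\sigma(c)\times c'$ and $\sigma(c\times c')$ is precomposition of the epimorphism with a constant permutation matrix on $\mathcal{O}^{M}$ (leaving $Z$, $W$, the section and $g$ untouched), that its $SL$-part is contracted to the identity by elementary-matrix homotopies $t_{i,j}(\lambda x)$ (the same device as equality~\ref{Gauss} and Lemma~\ref{NoDiff} later in the paper), and that the residual sign can be parked on a coordinate annihilated by the epimorphism (which exists after one further $N$-stabilisation, harmless in the colimit, and which is in fact the only case where an odd permutation arises) --- are exactly the verifications the paper leaves to the reader.
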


\begin{theorem} \label{main}
Sending a symbol $\{g_1, \dots, g_m\}$ to a level $0$ correspondence, specifically, a map 
$$
\widetilde{\sigma}_m(g_1, \dots, g_m) \colon  \pt \to \mathbb{G}_m^m,
$$
given by the coordinates $(g_1, \dots , g_m),$ and then taking the class of that correspondence in $H^0(C_* \mathbb{Z}F^{\cob}(\pt,\mathbb{G}_m^{\wedge m}))$ \textup(first according to the stabilisation\textup, then when going to the quotient $\mathbb{G}^{\wedge m}$ of $\mathbb{G}^m,$ and\textup, finally\textup, according to homotopy\textup)\textup, gives rise to a well-defined group homomorphism 
$$
\sigma_m\colon  K_m^M(k) \to H^0\big(C_* \mathbb{Z}F^{\cob}(\pt,\mathbb{G}_m^{\wedge m})\big).
$$
 Together these homomorphisms form a graded ring homomorphism
$$
\oplus \sigma_m\colon  \bigoplus K_m^M(k) \to \bigoplus H^0\big(C_* \mathbb{Z}F^{\cob}(\pt,\mathbb{G}_m^{\wedge m})\big).
$$
This homomorphism is an isomorphism.
\end{theorem}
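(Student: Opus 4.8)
The plan is to prove the theorem in three stages: verify that each $\sigma_m$ factors through the Milnor relations (so that the map on $K_m^M(k)$ is well defined), promote the family $\{\sigma_m\}$ to a graded ring map, and finally establish bijectivity. Since $K_m^M(k)$ is presented by the symbols $\{g_1,\dots,g_m\}$ with $g_i\in k^{\times}$ modulo multilinearity and the Steinberg relation, the first stage amounts to checking these two relations for the classes $\widetilde\sigma_m(g_1,\dots,g_m)$ in $H^0$. For multilinearity I would use that the multiplication $\Gm\times\Gm\to\Gm$, after passing to the smash quotient $\Gm^{\wedge}$ and to $H^0$, realises the additive structure: the standard $\mathbb{A}^1$-homotopy relating $(g_ig_i')$ to $(g_i)+(g_i')$ inside $C_*\mathbb{Z}F^{\cob}(\pt,\Gm)$ tensors up to give additivity in each slot. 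For the Steinberg relation $\{a,1-a\}=0$ I would exhibit the classical Suslin--Voevodsky homotopy, a correspondence supported on the affine line parametrised by $t\mapsto(t,1-t)$, whose simplicial boundary trivialises $\widetilde\sigma_2(a,1-a)$ in $H^0$.

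That the $\sigma_m$ assemble into a graded ring homomorphism is essentially forced by Definition~\ref{d:external_product}: the external product of the coordinate maps $(g_1,\dots,g_m)$ and $(h_1,\dots,h_{m'})$ is the coordinate map $(g_1,\dots,g_m,h_1,\dots,h_{m'})$, which is exactly $\widetilde\sigma_{m+m'}$ of the concatenated symbol; precomposing with the diagonal $\pt\to\pt\times\pt$ converts the external product of Definition~\ref{d:external_product} into the internal multiplication of Milnor $K$-theory. The only point needing care is compatibility with the idempotent splitting that defines $\Gm^{\wedge m}$, but this follows because the idempotents $e_i$ act factorwise and commute with direct summation.

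The substance of the statement is that $\oplus\sigma_m$ is bijective, and I would attack this by comparison with Neshitov's Milnor--Witt computation. There is a natural transformation $\mathbb{Z}F(X,Y)\to\mathbb{Z}F^{\cob}(X,Y)$ sending a framed correspondence $(Z,W,\phi,g)$ with $\phi\colon W\to\mathbb{A}^n$ to the cobordism-framed one with trivial bundle $E=\mathcal{O}^n$, standard epimorphism $\mathcal{O}^N\twoheadrightarrow\mathcal{O}^n$, and section $s=\phi$; here $Z=\phi^{-1}(0)$ is exactly the preimage of the zero section. On $H^0$ this induces a map $K_m^{MW}(k)\to H^0\big(C_*\mathbb{Z}F^{\cob}(\pt,\Gm^{\wedge m})\big)$ through which $\sigma_m$ factors, since the two constructions hit the same generators. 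The decisive observation is that the extra freedom in the cobordism framing, namely the ability to deform the bundle through $\mathrm{GL}_n$-families and to rotate the epimorphism $\mathcal{O}^N\twoheadrightarrow E$, trivialises the Grothendieck--Witt twists $\langle a\rangle$, collapsing $\mathrm{GW}(k)$ to $\mathbb{Z}$; this factors the map through the quotient $K_m^{MW}(k)/\eta\cong K_m^M(k)$, and I would then identify the induced quotient map with $\sigma_m$ and prove it is an isomorphism.

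The hard part will be injectivity, equivalently the claim that no relations beyond the Milnor relations are imposed. Following Suslin--Voevodsky and Neshitov, I would reduce an arbitrary level-$(n,N)$ correspondence over $\pt$ to a normal form supported on a finite set of points, via a moving argument putting supports in general position, and then identify the simplicial boundary maps of $C_*$ with the tame-symbol (residue) maps of Milnor $K$-theory on curves. Milnor's localisation sequence $0\to K_m^M(k)\to K_m^M(k(t))\to\bigoplus_x K_{m-1}^M(k(x))\to 0$ together with Weil reciprocity would then match the kernel of the boundary precisely with $K_m^M(k)$, yielding both surjectivity and injectivity. The most delicate point is verifying that the cobordism framing kills exactly $\eta$ and nothing more, that is, that the twist-trivialisation above produces no further collapse; this is where the computation genuinely diverges from Neshitov's Milnor--Witt answer and where the argument will require the greatest care.
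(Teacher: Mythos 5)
Your overall architecture (well-definedness of $\sigma_m$, ring structure from the external product, surjectivity by comparison with Neshitov, injectivity via residues and reciprocity) matches the paper's, and your observation that $\mathrm{GL}_n$-deformations of the framing kill the Grothendieck--Witt twists is a real step there (Lemma~\ref{NoDiff}). But two steps have genuine gaps. For the Steinberg relation, the ``classical Suslin--Voevodsky homotopy'' supported on $t\mapsto(t,1-t)$ is a \emph{finite} correspondence, not a framed one: to use it in $C_*\mathbb{Z}F^{\cob}$ you must equip it with a cobordism-framing with prescribed boundary, and that construction is precisely the missing content. The paper instead imports the relation from $K^{MW}$ (where Steinberg is a defining relation) along the ring homomorphism $\Hcob$, and even this requires Lemma~\ref{same}, which identifies the level-$1$ generator $[x-a]$ with the level-$0$ map $\const_a$ by an explicit two-step homotopy --- the point your phrase ``the two constructions hit the same generators'' silently assumes. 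Likewise, your factorisation through $K^{MW}_m(k)$ only yields surjectivity of $\sigma_m$ if you also prove that $\Hcob$ is onto; that is the paper's Lemma~\ref{all_triv} and Corollary~\ref{H0-cob} (triviality of $E$ near finitely many points, plus Gaussian-elimination homotopies), a use of your ``rotate the epimorphism'' idea that your sketch never actually makes.

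The more serious gap is injectivity. Your plan to ``match the kernel of the boundary with $K_m^M(k)$'' never constructs the one object that makes the argument run: an explicit homomorphism $\rho_m\colon H^0\big(C_*\mathbb{Z}F^{\cob}(\pt,\Gm^{\wedge m})\big)\to K_m^M(k)$ splitting $\sigma_m$. The paper defines it on generators by $\sum_i d_i\,\Tr\big|^{F_i}_k\{g_1|_{z_i},\dots,g_m|_{z_i}\}$, where the $z_i$ are the (possibly non-rational) support points and $d_i$ is the length of the local Artin ring of $\phi^{-1}(\text{zero section})$ at $z_i$; neither the multiplicities nor the traces appear in your sketch. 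The genuinely hard step, which ``localisation sequence plus Weil reciprocity'' does not cover as stated, is homotopy invariance of this map: the support of a homotopy is a curve finite over $\mathbb{A}^1$ that is in general \emph{non-reduced}, while Bass--Tate reciprocity applies to smooth projective curves. The bulk of the paper's final section (Lemmas~\ref{SmoothNilpotents} and~\ref{loc.free}, and the degree argument over $\Spec k[[t]]$ comparing the projective closure $Z_{Sch}$ with its blow-up $\widetilde{Z_{Sch}}$) exists precisely to extend reciprocity to such non-reduced supports with the correct multiplicity bookkeeping; a ``moving argument putting supports in general position'' cannot substitute for it, because the boundary fibers of the homotopy are fixed and cannot be deformed. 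Note finally that once $\rho_m\circ\sigma_m=\id$ is known (immediate on symbols, Note~\ref{basis}), surjectivity of $\sigma_m$ finishes the proof; your stronger aim of computing the kernel of the simplicial boundary exactly is both unnecessary and harder than what the paper actually does.
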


\section{Comparison with the framed correspondence case}

In this section, we use some notation and definitions from the paper \cite{Milnor-Witt}.
 
Let us define maps, for all $N \geq n,$ 

\[cob_{n,N} : Fr_{n}(X,Y) \to Fr_{n,N}^{cob}(X,Y),\]

that are natural in $X$ and $Y$ (from the categories of varieties and presheaves respectively), and that, as a whole, commute with the stabilisation maps.

  Take the image of the correspondence $(Z,W,\phi, g)$ to be $(Z,W,i_{n,N} \circ \phi, g)$, where $i_{n,N}$ is defined as follows: The point $* \in Gr_{n,N}$ is given by the projection to the first $n$ coordinates $k^N \to k^n.$ So the fiber $\tau_{N,n}^{Sh}|_*$ of the tautological bundle on $Gr_{N,n}$ is canonically isomorphic to $k^n.$ Accordingly, the fiber of the total space is $\mathbb{A}^n \simeq \tau_{n,N} \times_{Gr_{n,N}} *$ canonically. Taking the composition of this isonorphism with the embedding of the fiber, we get
	
	\[i_{n,N}: \mathbb{A}^n \simeq \tau_{n,N} \times_{Gr_{n,N}} * \hookrightarrow \tau_{n,N}.\] 
	
A simple check shows that the maps $cob_{n,N}$ commute with stabilisation maps, and that they preserve the extra additivity relations for $\mathbb{Z}F $ and $\mathbb{Z}F^{cob}$, which lets us define the natural maps

\[cob : Fr(X,Y) \to Fr^{cob}(X,Y)\]

 and natural homomorphisms

\[\mathbb{Z}cob : \mathbb{Z}F(X,Y) \to \mathbb{Z}F^{cob}(X,Y). \]

Thus, there is a homomorphism

\[\widetilde{\Hcob}_{X,Y} : H^0(C_* \mathbb{Z}F( X, Y)) \to H^0(C_* \mathbb{Z}F^{cob}(X, Y)),\]

 and, for each $m,$ the principal direct summand in $\widetilde{\Hcob}_{pt,\mathbb{G}_m^{\times m}}$:

 \[\Hcob : H^0(C_* \mathbb{Z}F(pt, \mathbb{G}_m^{\wedge m})) \to H^0(C_* \mathbb{Z}F^{cob}(pt, \mathbb{G}_m^{\wedge m})).\]

 We will soon show that this homomorphism is onto.

  We begin by naturally parameterising the data $Fr_{n,N}^{cob}(X,Y),$ that have the bundle $E$ be trivial, by some data including a choice of the trivialisation ; we explore the properties of this parameterisation.

\begin{definition}
The set $Fr_{n,N}^{cob,triv}(X,Y)$ of \textbf{trivialised cobordism-framed correspondences} consists of classes of tuples $(Z,W,A,v,g)$, where $Z,W$ and $g$ are the same as in Definition \ref{cob_unstable}, and the equivalence relation is defined similarly (by refining $W$), but instead of $\phi$ the following "numerical" datum is used: A matrix $A$ $N \times n$ and rank $n$ (at all points) and a vector $v$ --- a column of height $n.$ The entries of $A$ and $v$ are in the ring $k[W].$ It is required that $\{v=0\} = Z.$ 
\end{definition}
That datum gives rize to a regular map $\phi_{A,v} : W \to \tau_{n,N},$ taking for the bundle $E$ the trivial bundle $\mathcal{O}^n$, and for the epimorphism, the linear operator $\mathcal{O}^N \to \mathcal{O}^n$ given by the matrix $A,$ which is an epimorphism, by the rank requirement; the section  $s$ is given by the vector $v$. In fact, choosing a preimage of $\phi = (\mathcal{O}^N \twoheadrightarrow E, s)$ under the map $(A,v) \to \phi_{A,v}$ is equivalent to choosing a trivialisation of $E.$ Applied to $Fr_{n,N}^{cob,triv}(pt,Y),$ $W$ is an arbitrarily small neighbourhood of a finite number of points. Any bundle $E$ is trivial on a small enough $W,$ so
\begin{lemma} \label{all_triv}
If $X = pt,$ then the map 

\[Forget_{n,N} : Fr_{n,N}^{cob,triv}(X,Y) \to Fr_{n,N}^{cob}(X,Y)\]

 is onto.
\end{lemma}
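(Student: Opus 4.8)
\begin{plan}
The plan is to take an arbitrary class $(Z,W,\phi,g)\in Fr_{n,N}^{cob}(\pt,Y)$ and, after refining $W$, make the rank-$n$ bundle $E$ underlying $\phi$ literally trivial, so that the numerical datum $(A,v)$ can simply be read off. Write $\phi$ via its associated data $(p\colon\mathcal{O}_W^N\twoheadrightarrow E,\ s)$ as in the note following Definition~\ref{cob_unstable}, so that $E$ is locally free of rank $n$ on $W$ and $Z=\{s=0\}$. Since the target of $\Forget_{n,N}$ consists of equivalence classes under refinement, it suffices to produce a trivialised representative on \emph{some} refinement $W'\to W$, and to check that it maps to the given class.

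First I would reduce to an affine $W$. Because $X=\pt$, the support $Z$ is finite over $\Spec k$, i.e.\ a finite set of points whose coordinate ring is Artinian, hence semilocal. As $W$ is \'etale over $\mathbb{A}^n_k$ it is quasi-projective over $k$ (by Zariski's main theorem one factors $W\to\mathbb{A}^n_k$ as an open immersion into a finite, hence affine, $\mathbb{A}^n_k$-scheme), so the finite set $Z$ has an affine open neighbourhood $U\supseteq Z$; the inclusion $U\hookrightarrow W$ is an \'etale morphism over $\mathbb{A}^n_k$ and $U\times_{\mathbb{A}^n_k}Z=Z$, so $U$ is a legitimate refinement. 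Thus I may assume $W=\Spec R$ with $Z=\Spec(R/I)$.

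Next I would trivialise $E$ near $Z$. Restricting to $Z$, the module $E|_Z=E\otimes_R R/I$ is finitely generated projective of constant rank $n$ over the semilocal ring $R/I$, hence free; pick a basis $\bar e_1,\dots,\bar e_n$. Since $W$ is affine the surjection $\Gamma(W,E)\twoheadrightarrow\Gamma(Z,E|_Z)=\Gamma(W,E)/I\,\Gamma(W,E)$ lets me lift these to $e_1,\dots,e_n\in\Gamma(W,E)$, defining $\psi\colon\mathcal{O}_W^n\to E$ which is an isomorphism on every fibre over $Z$. By Nakayama together with openness of the surjectivity locus, $\psi$ is surjective on an open neighbourhood of $Z$, and a surjection of locally free sheaves of equal rank $n$ is an isomorphism there; refining $W$ once more to this open set, $\psi$ is an isomorphism. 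Transporting $p$ and $s$ through $\psi^{-1}$ then yields the matrix $A$ of $\mathcal{O}^N\xrightarrow{p}E\xrightarrow{\psi^{-1}}\mathcal{O}^n$ (of rank $n$ everywhere because $p$ is an epimorphism) and the column $v=\psi^{-1}(s)$, for which $\{v=0\}=\{s=0\}=Z$. By construction $\phi_{A,v}=\phi$ on this refinement $W'$, so $\Forget_{n,N}(Z,W',A,v,g|_{W'})$ equals the class $(Z,W,\phi,g)$.

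The only genuine content is the extension of a frame from $Z$ to a neighbourhood; the obstacle is simply that $E$ need not be globally trivial on $W$. What resolves it is exactly the semilocality of $Z$ (which makes $E|_Z$ free) combined with the freedom to refine, turning ``trivial on the fibres over $Z$'' into ``trivial on an open neighbourhood'' via Nakayama and openness of the isomorphism locus. The remaining points --- existence of an affine neighbourhood of the finite set $Z$, and the identity $\phi_{A,v}=\phi$ under the chosen trivialisation --- are formal.
\end{plan}
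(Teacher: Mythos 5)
Your proposal is correct and follows essentially the same route as the paper: the paper's proof likewise observes that, since $X=\pt$, the support $Z$ is a finite set of points, so after refining $W$ to a small enough neighbourhood the bundle $E$ becomes trivial, and a choice of trivialisation is exactly a preimage of $\phi=(\mathcal{O}^N\twoheadrightarrow E,\,s)$ under $(A,v)\mapsto\phi_{A,v}$. The only difference is that you spell out the justifications (quasi-projectivity of $W$, semilocal freeness of $E|_Z$, Nakayama and openness of the isomorphism locus) that the paper leaves implicit in the phrase ``any bundle $E$ is trivial on a small enough $W$.''
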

 For $N>n$ Gaussian elimination for columns reduces $A$ to the matrix of projection onto the first  $n$ coordinates, using elementary operations of type 1. To each elementary operation $t_{i,j}(\lambda)$ corresponds an elementary operation $t_{i,j}(\lambda x)$ over $k[W][x].$ Consider the element

\[\left(Z \times \mathbb{A}^1, W \times \mathbb{A}^1,  A \cdot t_{i,j}(\lambda x),v,g \right) \in Fr_{n,N}^{cob,triv}(\mathbb{A}^1,Y).\]

It gives a homotopy between $(Z,W,A,v,g)$ 

and $(Z,W,A\cdot t_{i,j}(\lambda),v,g).$

 From this and the previous note, it follows that

\begin{equation} \label{Gauss}
[(Z,W,A,v,g)] = [(Z,W,P,v,g)] \in \pi_0 \left( Fr_{n,N}^{cob,triv}(X \times \Delta_k^{\bullet},Y) \right),
\end{equation}
 where $P$ is the matrix of projection onto the first $n$ coordinates.

Note that, denoting the map $W \to \mathbb{A}^n$ corresponding to $v$ by universal property by $a_v,$

\[\phi_{P,v} = i_{n,N} \circ a_v.\]

 Thus
\begin{equation} \label{cob-forget}
Forget_{n,N} \left(Z,W,A,v,g\right) = cob_{n,N} (Z,W,a_v,g) 
\end{equation}
\begin{corollary} \label{H0-cob}
The homomorphism $\widetilde{\Hcob}_{X,Y}$ is onto if $X=pt$. Hence the homomorphism $\Hcob$ is onto, as it is a direct summand.
\end{corollary}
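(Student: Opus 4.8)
\begin{plan}
The strategy is to chase an arbitrary class through the three facts just assembled: the surjectivity of $Forget_{n,N}$ when $X=pt$ (Lemma~\ref{all_triv}), the Gaussian-elimination homotopy~\eqref{Gauss}, and the identity~\eqref{cob-forget}. The corollary is essentially their formal consequence, so the work lies in organising the bookkeeping between pointed sets, their linearisations, and cohomology.

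First I would represent a given class in $H^0(C_* \mathbb{Z}F^{cob}(pt, Y))$ by a $0$-chain $\alpha = \sum_i n_i c_i \in \mathbb{Z}F^{cob}(pt, Y)$, a finite integer combination of explicit cobordism-framed correspondences $c_i$ realised on a common level $(n,N)$; by stabilising along $N$ one may assume $N > n$, which is what~\eqref{Gauss} requires. Since $X=pt$, Lemma~\ref{all_triv} lifts each $c_i$ along $Forget_{n,N}$ to a trivialised correspondence $\tilde c_i = (Z_i, W_i, A_i, v_i, g_i)$; because $Forget_{n,N}$ surjects onto the generating set $Fr_{n,N}^{cob}(pt,Y)$, its linearisation surjects onto $\mathbb{Z}F_{n,N}^{cob}(pt,Y)$, producing a lift $\tilde\alpha$ of the whole chain $\alpha$.

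Next I would apply~\eqref{Gauss}, which connects each $\tilde c_i$ by a homotopy to $(Z_i, W_i, P, v_i, g_i)$, with $P$ the projection onto the first $n$ coordinates. Pushing this homotopy forward along the natural map $Forget_{n,N}$ and linearising, I would conclude that $[c_i]$ and $[Forget_{n,N}(Z_i, W_i, P, v_i, g_i)]$ coincide in $H^0$. Then~\eqref{cob-forget} identifies the latter with $cob_{n,N}(Z_i, W_i, a_{v_i}, g_i)$, manifestly in the image of $cob_{n,N}$; here $(Z_i, W_i, a_{v_i}, g_i)$ is a genuine level-$n$ framed correspondence because $\{v_i = 0\} = Z_i$ forces $Z_i = a_{v_i}^{-1}(0)$. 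As the image of $\widetilde{\Hcob}_{pt,Y}$ is a subgroup, the class of $\alpha = \sum_i n_i c_i$ lies in it, so $\widetilde{\Hcob}_{pt,Y}$ is onto.

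For the second assertion I would note that the idempotents $e_i$ carving out the summand $\mathbb{G}_m^{\wedge m}$ arise from endomorphisms of the target $\mathbb{G}_m^{\times m}$, and that $cob$, hence $\widetilde{\Hcob}$, is natural in $Y$ and therefore commutes with post-composition by these endomorphisms. Consequently $\widetilde{\Hcob}_{pt,\mathbb{G}_m^{\times m}}$ commutes with the projector $\prod_{i=1}^m (1 - e_i)$, and a surjection commuting with a projector restricts to a surjection on the corresponding direct summands; this yields surjectivity of $\Hcob$. The one step that will demand care is the middle one: the homotopy~\eqref{Gauss} is stated for the pointed simplicial \emph{sets} $Fr^{cob,triv}$, so I must verify that applying the natural transformation $Forget_{n,N}$ and then linearising genuinely converts it into a boundary in the normalised complex $C_* \mathbb{Z}F^{cob}(pt,Y)$, i.e.\ into an equality of classes in $H^0$. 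Once that translation is in place, the rest is routine.
\end{plan}
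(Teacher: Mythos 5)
Your proposal is correct and takes essentially the same route as the paper's own proof: lift along $Forget_{n,N}$ via Lemma~\ref{all_triv}, reduce $A$ to the projection $P$ via the homotopy~\eqref{Gauss}, and recognise the result as an image under $cob_{n,N}$ via~\eqref{cob-forget}. The paper's version is simply terser, working one generator at a time and leaving the linearisation bookkeeping and the direct-summand argument for $\Hcob$ implicit.
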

\begin{proof}
The maps $Forget_{n,N}$ are onto by Lemma \ref{all_triv}. Suppose the element in question can be written as

\[Forget_{n,N}(Z,W,A,v,g).\]

 Equality \ref{Gauss} shows that it is homotopic to a "trivial" correspondence

\[Forget_{n,N}(Z,W,P,v,g).\]

 Equality \ref{cob-forget} shows that this is in the image of $cob_{n,N}.$ 
\end{proof}

Any change of basis in $E$ provides another parameter with the same image under $Forget_{n,N}.$ Hence the equality:

 \begin{equation} \label{GL} 
\forall M \in GL_n(k[W]),  \Forget_{n,N}(Z,W,A,v,g) =\Forget_{n,N} (Z,W,M A,M v,g). 
\end{equation} 

The following lemma can be interpreted as the statement that for $X=pt,$ the class of a correspondence in $H^0(\mathbb{Z}F^{cob}(pt,Y))$ is independent on the differential map of the framing map $\phi$. 

\begin{lemma} \label{NoDiff}
For $X=pt,$

$$\forall M \in GL_n(k),  cob_{n,N}(Z,W,a_v,g) \sim cob_{n,N}(Z,W,a_{M v},g),$$

i.e. their classes in $ H^0(\mathbb{Z}F^{cob}(X, Y)$ are equal.
\end{lemma}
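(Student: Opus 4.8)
The plan is to reduce the statement, via the comparison \eqref{cob-forget} between $\Forget_{n,N}$ and $cob_{n,N}$, to two facts already available in the trivialised picture: the change-of-basis identity \eqref{GL} and the Gaussian-elimination homotopy \eqref{Gauss}. First I would use \eqref{cob-forget}, which identifies $\Forget_{n,N}(Z,W,P,v,g)$ with $cob_{n,N}(Z,W,a_v,g)$ (here $P$ is the matrix of the projection onto the first $n$ coordinates), to rewrite both sides of the asserted equivalence as
\[
cob_{n,N}(Z,W,a_v,g) = \Forget_{n,N}(Z,W,P,v,g), \qquad cob_{n,N}(Z,W,a_{Mv},g) = \Forget_{n,N}(Z,W,P,Mv,g).
\]
It is therefore enough to compare the trivialised correspondences $(Z,W,P,v,g)$ and $(Z,W,P,Mv,g)$ after applying $\Forget_{n,N}$.

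The link between them is \eqref{GL}, applied to the constant matrix $M \in GL_n(k) \subseteq GL_n(k[W])$, which yields
\[
\Forget_{n,N}(Z,W,P,v,g) = \Forget_{n,N}(Z,W,MP,Mv,g).
\]
This is the one step that requires care: \eqref{GL} cannot move the section $v$ to $Mv$ without simultaneously turning $P$ into $MP$, so it overshoots, landing at $(MP,Mv)$ instead of the desired $(P,Mv)$. The remaining task is to restore the matrix to $P$ while holding the section $Mv$ fixed, and this is exactly what \eqref{Gauss} accomplishes. Since $M$ is invertible, $MP$ still has rank $n$ at every point and $\{Mv = 0\} = \{v = 0\} = Z$, so $(Z,W,MP,Mv,g)$ is a legitimate trivialised correspondence; the column reductions of \eqref{Gauss} act only on the matrix and leave the section, hence the support, untouched, so
\[
[\Forget_{n,N}(Z,W,MP,Mv,g)] = [\Forget_{n,N}(Z,W,P,Mv,g)] \quad \text{in } H^0.
\]
Concatenating the three relations gives $cob_{n,N}(Z,W,a_v,g) \sim cob_{n,N}(Z,W,a_{Mv},g)$.

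I do not expect a genuine obstacle here: the proof is a short concatenation of results already in hand, and its whole content is the observation that \eqref{GL} and \eqref{Gauss} act on the pair (matrix, section) in complementary ways, so their composite changes the section alone. What remains is routine: taking $N$ large enough ($N > n$) for \eqref{Gauss} to apply, which is harmless after stabilisation; checking that the intermediate matrices in the column reduction retain rank $n$ and that the section $Mv$ (and hence $Z$, finite over $X = pt$) is unaffected; and noting that $\Forget_{n,N}$, being natural in $X$, transports the $\Delta_k^{\bullet}$-homotopy of \eqref{Gauss} into an equality of classes in $H^0$.
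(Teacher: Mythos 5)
Your proposal is correct and follows exactly the paper's own argument: the same chain $cob_{n,N}(Z,W,a_v,g) = \Forget_{n,N}(Z,W,P,v,g) = \Forget_{n,N}(Z,W,MP,Mv,g) \sim \Forget_{n,N}(Z,W,P,Mv,g) = cob_{n,N}(Z,W,a_{Mv},g)$, using equality \eqref{cob-forget}, the change-of-basis identity \eqref{GL}, and the Gaussian-elimination homotopy \eqref{Gauss} in the same order. Your additional checks (rank of $MP$, invariance of the support, $N>n$ after stabilisation) are sound and only make explicit what the paper leaves implicit.
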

\begin{proof}
\begin{equation*} 
\begin{split}
\cob_{n,N}(Z,W,a_v,g)&\stackrel{\mbox{{\tiny equality \ref{cob-forget}}}}{=}
\Forget_{n,N}(Z,W,P,v,g) 
\\
&\stackrel{\mbox{{\tiny equality \ref{GL}}}}{=} \Forget_{n,N}(Z,W,M P,M v,g)
\\
  &\stackrel{\mbox{{\tiny equality \ref{Gauss}}}}{\sim}
\Forget_{n,N}(Z,W,P,M v,g) 
\\
&\stackrel{\mbox{{\tiny equality \ref{cob-forget}}}}{=}
\cob_{n,N}(Z,W,a_{M v},g). \qedhere
\end{split}
\end{equation*}
\end{proof}

This allows us to reduce various correspondences to maps:

\begin{proposition} \label{reduce_to_value}
If $X=pt,$ and $Y$ is an open subvariety in an affine space, then any cobordism-framed correspondence $Z,W,\phi,g$ with $Z \simeq Spec(k)$ a single rational point, cut out transversally by the zero-section, has the same class  in $H^0(C_* \mathbb{Z}F^{cob}(pt,Y))$ as the map (correspondence of level $0$) $g|_Z$  \end{proposition}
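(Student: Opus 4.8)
The plan is to reduce the correspondence, up to homotopy, to a completely standard model and then contract that model by a scaling homotopy. First I would trivialise the bundle: since $X=\pt$, by the discussion preceding Lemma~\ref{all_triv} the bundle $E$ is trivial on the (small) neighbourhood $W$, so the given correspondence is $\Forget_{n,N}(Z,W,A,v,g)$ for suitable numerical data $(A,v)$, and transversality of the zero-section is precisely the statement that the differential $dv|_Z$ is an isomorphism (it involves $v$ only, not $A$). Applying Gaussian elimination (equality~\ref{Gauss}) to replace $A$ by the projection $P$, and then equality~\ref{cob-forget}, I may assume up to homotopy that the correspondence has the form $\cob_{n,N}(Z,W,a_v,g)$, where $a_v\colon W\to\mathbb{A}^n$ satisfies $a_v^{-1}(0)=Z$ and is étale at $Z$.

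Next I would put the framing into linear normal form. By Lemma~\ref{NoDiff} I may precompose $v$ with an element of $GL_n(k)$ so that $da_v|_Z=\id$; in any case $a_v$ is étale at the single rational point $Z$ lying over $0$, so, working in characteristic $0$ and using that $Z$ is the whole fibre $a_v^{-1}(0)$, after shrinking $W$ to a smaller neighbourhood — a refinement, which does not change the class — the map $a_v$ becomes an open immersion onto an open subset $U\ni 0$ of $\mathbb{A}^n$. I then identify $W=U$ and $a_v$ with the inclusion, so the framing is $i_{n,N}$ composed with $U\hookrightarrow\mathbb{A}^n$ and $g\colon U\to Y$.

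The core of the argument is the scaling homotopy over $\Delta^1_k\cong\mathbb{A}^1_t$ contracting this model to the level-$0$ correspondence. I would take the support to be $\mathbb{A}^1\times\{0\}$ (finite, indeed isomorphic, over $\mathbb{A}^1$), the neighbourhood to be the open set $\mathcal{W}=\{(t,x)\in\mathbb{A}^1\times\mathbb{A}^n : tx\in U\}$, which contains the support because $t\cdot 0=0\in U$, the framing to be $(t,x)\mapsto x$ (trivial bundle, epimorphism $P$, section the coordinate functions of $\mathbb{A}^n$), whose zero locus is exactly the support, and the map to be $G(t,x)=g(tx)$, which is regular on $\mathcal{W}$ precisely because $tx\in U$ there and $g$ is defined on $U$. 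Restricting to $t=1$ recovers $\cob_{n,N}(Z,U,\mathrm{incl},g)$, while restricting to $t=0$ gives support $\{0\}$, neighbourhood all of $\mathbb{A}^n$, framing the identity, and the constant map with value $g(0)$ — which is exactly the $(n,N)$-fold stabilisation of the level-$0$ correspondence $g|_Z$. Hence the original correspondence and $g|_Z$ are equal in $H^0(C_*\mathbb{Z}F^{\cob}(\pt,Y))$.

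The main obstacle I expect is verifying that this scaling homotopy is a legitimate cobordism-framed correspondence over $\mathbb{A}^1$: that $\mathcal{W}$ is genuinely an étale (here open) neighbourhood of the support on which $G$ is regular and lands in $Y$, that the support stays finite over $\mathbb{A}^1$, and, most delicately, that the two faces $t=0,1$ match the required correspondences on the nose, including the bundle, the epimorphism, the section, and the stabilisation level. A secondary technical point is the reduction in the second paragraph that an étale map with a single rational point over $0$ restricts to an open immersion on a neighbourhood — essentially the algebraic implicit function theorem in characteristic $0$.
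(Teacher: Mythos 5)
Your first step (trivialising $E$ and applying Gaussian elimination to reduce to $\cob_{n,N}(Z,W,a_v,g)$) is exactly how the paper's own argument begins, via Corollary~\ref{H0-cob} and equalities~\ref{Gauss}, \ref{cob-forget}. The fatal problem is your second step: it is \emph{false} that an \'etale map $a_v\colon W\to\mathbb{A}^n$ whose zero fibre is a single reduced rational point becomes an open immersion after shrinking or refining $W$. Take $n=1$, $W=\mathbb{A}^1\setminus\{-1\}$, $a_v=x^2-1$, $Z=\{1\}$: all hypotheses of the Proposition hold ($a_v$ is \'etale at $1$ in characteristic $0$, and $a_v^{-1}(0)=Z$ is reduced), yet no Zariski shrinking can make $a_v$ injective (any open $W'\ni 1$ is cofinite, hence contains $\pm a$ with $a_v(a)=a_v(-a)$ for almost all $a$), and no \'etale refinement helps either: a refinement $i\colon W''\to W$ is \'etale over $W$, so the function field of the component of $W''$ meeting $Z$ contains $k(x)$, whereas the framing $a_v\circ i$ generates only the subfield $k(x^2-1)\subsetneq k(x)$; for $a_v\circ i$ to be an open immersion into $\mathbb{A}^1$ these fields would have to coincide. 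This is precisely where the ``algebraic implicit function theorem'' fails, and it is why refinements alone can never normalise a framing --- one needs homotopies. The paper fills this hole by quoting Neshitov's moving results (\cite{Milnor-Witt}, 4.10 and Lemma 5.2), which replace the correspondence \emph{up to homotopy} by a level-$1$ correspondence with linear framing $\mu(t-\lambda)$ on an open subset of $\mathbb{A}^1$; then Lemma~\ref{NoDiff} removes $\mu$ and a translation homotopy removes $\lambda$ --- the same endgame as your scaling homotopy, but with justified input.

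There is also a secondary gap: even granting the open immersion, ``identifying $W=U$ and $a_v$ with the inclusion'' silently replaces the \'etale structure map $W\to\mathbb{A}^n$ and the support $Z\subseteq\mathbb{A}^n$ by the inclusion $U\subseteq\mathbb{A}^n$ and by $\{0\}$. That is not an equivalence of correspondences: equivalence requires equal supports and a common refinement over $\mathbb{A}^n_X$, the structure map being part of the data. Hence the $t=1$ face of your scaling homotopy (which is otherwise a legitimate cobordism-framed correspondence over $\mathbb{A}^1$, with correct $t=0$ face equal to the stabilised level-$0$ correspondence) is not the correspondence you reduced to; bridging that discrepancy again requires a homotopy argument, not a re-labelling of the data.
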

\begin{proof}
By Corollary \ref{H0-cob}, the class of the correspondence in question is the image of some framed correspondence with the same $Z$ and $g|_Z$ under the map $\widetilde{\Hcob}_{X,Y}$

By \cite{Milnor-Witt}[4.10] (Where, in fact, the correspondence is proven to be equivalent to a single correspondence of level $1$, and not their sum, and the map $g|_Z$ stays the same), we reduce to the case of a level $1$ correspondence. By \cite{Milnor-Witt}[Lemma 5.2], this correspondence is equivalent, as a cobordism-framed correspondence, to

\[\mathbb{A}^1_k, \{\mu(t-\lambda)=0\},i_{1,1} \circ a_{\mu(t-\lambda)},g(\{\mu(t-\lambda)=0\}).\]

 By Lemma \ref{NoDiff}, $\mu$ can be homotopied to $1.$ Then it is easy to present a (translation) homotopy taking $\lambda$ to $0.$ As a result, we get exactly the image of thelevel $0$ correspondence

\[(Spec(k), Spec(k), id, g(Z))\]

 under the $n$-wise stabilisation map.
\end{proof}

 The natural transformation $\widetilde{\Hcob}_{-,-}$ respects the external multiplication operations on framed correspondences and cobordism-framed correspondences.

\begin{lemma} \label{l:H0-cob_external_multiplication}
There are external multiplication operations on the groups $H^0(C_*\bb ZF(-,-))$ (see~\cite[Section 3]{Milnor-Witt}) and $H^0(C_*\bb ZF^{cob}(-,-))$ (see~\ref{d:external_product}).
Those are compatible with the natural transformation $\widetilde{\Hcob}_{-,-},$ which means that the following square commutes:
\[
\begin{tikzcd}
H^0(C_* \mathbb{Z}F(X, Y)) \otimes H^0(C_* \mathbb{Z}F(X', Y')) \arrow[swap,d,"\widetilde{\Hcob}_{X,Y} \otimes \widetilde{\Hcob}_{X',Y'}"] \arrow[r,"m_F"] & H^0(C_* \mathbb{Z}F(X \times X', Y \times Y')) \arrow[d,"\widetilde{\Hcob}_{X \times X', Y \times Y'}"]\\
H^0(C_* \mathbb{Z}F^{cob}(X, Y)) \otimes H^0(C_* \mathbb{Z}F^{cob}( X', Y')) \arrow[swap,r,"m_{F^{cob}}"] & H^0(C_* \mathbb{Z}F^{cob}(X \times X', Y \times Y'))
\end{tikzcd}.
\]
\end{lemma}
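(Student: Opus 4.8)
The plan is to prove commutativity of the square by working at the level of explicit correspondences rather than homotopy classes, and then descending to $H^0$. First I would unwind both external multiplication operations on representatives. The external product $m_{F^{\cob}}$ takes a pair $(Z,W,\phi,g)$ and $(Z',W',\phi',g')$ to $(Z\times Z', W\times W', i_{((n,N),(n',N'))}\circ(\phi\times\phi'), g\times g')$ via the direct-sum morphism of tautological total spaces described in Definition~\ref{d:external_product}. The framed external product $m_F$ (from~\cite[Section 3]{Milnor-Witt}) similarly takes a pair to a product correspondence $(Z\times Z', W\times W', \phi\times\phi', g\times g')$, where now $\phi,\phi'$ land in affine spaces $\mathbb{A}^n,\mathbb{A}^{n'}$ and the product framing lands in $\mathbb{A}^{n+n'}=\mathbb{A}^n\times\mathbb{A}^{n'}$.

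The core of the argument is then a compatibility check at the level of the comparison map $\cob$. Recall from Section~3 of the excerpt that $\cob_{n,N}$ sends $(Z,W,\phi,g)$ to $(Z,W,i_{n,N}\circ\phi,g)$, where $i_{n,N}\colon\mathbb{A}^n\hookrightarrow\tau_{n,N}$ is the inclusion of the distinguished fiber over the point $*\in Gr_{n,N}$ corresponding to the projection $k^N\to k^n$. The key commutativity I need to establish is the identity
\begin{equation*}
i_{n+n',N+N'}\circ(\text{incl}) = i_{((n,N),(n',N'))}\circ(i_{n,N}\times i_{n',N'})
\end{equation*}
as maps $\mathbb{A}^n\times\mathbb{A}^{n'}\to\tau_{n+n',N+N'}$; that is, forming the framed product and then applying $\cob$ agrees with applying $\cob$ to each factor and then forming the cobordism product. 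Concretely, both sides correspond to the trivial-bundle data with epimorphism given by the projection $k^{N+N'}\to k^{n+n'}$ and section $(s,s')$, once one tracks the reshuffling isomorphism $T_{n,N,n',N'}$ against the way the distinguished points $*$ sit inside the larger Grassmannian. I would verify this by comparing the epimorphisms and sections produced on each side on the represented functors, using that $i_{n,N}$ picks out precisely the fiber where $E$ is trivialized by the standard projection.

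Having matched representatives up to the bookkeeping of $T_{n,N,n',N'}$, I would argue that any residual discrepancy between the two composite framings is a change of basis in the trivial target bundle, hence by Lemma~\ref{NoDiff} and equation~\eqref{GL} it does not affect the class in $H^0$. This reduces the whole statement to the commutativity of the square of explicit product operations up to the equivalence relation and homotopy defining $H^0$, which I then need to check is compatible with stabilization and with passage to $\Delta_k^\bullet$. The main obstacle I anticipate is exactly the reshuffling isomorphism $T_{n,N,n',N'}$: the cobordism product interleaves the $n$ and $N$ blocks of the two factors, whereas the framed product and the two separate applications of $\cob$ order the coordinates differently, so the two composites will agree only after a specific permutation of basis vectors. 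Showing that this permutation is realized by an element of $GL_{n+n'}(k)$ acting on the trivialized bundle — so that Lemma~\ref{NoDiff} applies and the permutation is invisible in $H^0$ — is the crux; once that is in hand, the diagram commutes on representatives and therefore descends to the asserted commuting square on cohomology.
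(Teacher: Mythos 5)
Your proposal takes exactly the paper's route: the paper's entire proof consists of the commutative square formed by $i_{n,N}\times i_{n',N'}$, the direct-sum map $i_{((n,N),(n',N'))}$, the canonical identification $\mathbb{A}^n\times\mathbb{A}^{n'}\simeq\mathbb{A}^{n+n'}$, and $i_{n+n',N+N'}$ — precisely the identity you isolate as the key step, checked on represented functors and then descended to explicit correspondences, to complexes, and to $H^0$.

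The one correction worth making is that the obstacle you designate as the crux is vacuous: the square commutes on the nose, with no residual permutation left to absorb. On the functor of points, going across and then down sends $(x,x')$ first to the pair $\bigl((P_n\colon\mathcal{O}^{n+N}\twoheadrightarrow\mathcal{O}^n,\,x),\,(P_{n'}\colon\mathcal{O}^{n'+N'}\twoheadrightarrow\mathcal{O}^{n'},\,x')\bigr)$ and then to the datum whose epimorphism is $P_n\oplus P_{n'}$ rearranged by the shuffle $T_{n,N,n',N'}$; since that shuffle moves the $\mathcal{O}^{n'}$ block up front next to the $\mathcal{O}^{n}$ block, this composite is literally the coordinate projection $\mathcal{O}^{n+n'+N+N'}\twoheadrightarrow\mathcal{O}^{n+n'}$ with section $(x,x')$, which is exactly what $i_{n+n',N+N'}$ produces along the other path. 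In other words, $T_{n,N,n',N'}$ was built into Definition~\ref{d:external_product} precisely so that the distinguished fibers match exactly, not merely up to a change of basis. This matters because the fallback you propose would be shaky in the stated generality: Lemma~\ref{NoDiff} is formulated only for $X=\pt$, whereas the present lemma concerns the natural transformation $\widetilde{\Hcob}_{-,-}$ for arbitrary $X,X'$; moreover, any ordering discrepancy would sit in the source $\mathcal{O}^{n+n'+N+N'}$ of the epimorphism rather than in the target bundle, so equation~\eqref{GL}, which modifies the data on the target side via $(A,v)\mapsto(MA,Mv)$, would not apply directly. Fortunately none of that repair work is needed: once you carry out the functor-of-points check you describe, the diagram commutes exactly on representatives and descends to $H^0$ as you say.
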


\begin{proof}
The statement follows from the commutative square below:

\[\begin{tikzcd}
\mathbb{A}^n \times \mathbb{A}^{n'} \arrow[d] \arrow[rr, "i_{n,N} \times i_{n',N'}"] && \tau_{n,N} \times \tau_{n',N'} \arrow[d] \\
\mathbb{A}^{n+n'} \arrow[rr,"i_{n+n',N+N'}"] && \tau_{n+n',N+N'}
\end{tikzcd}\]
\end{proof}

\begin{corollary} \label{cob-gr}
The external multiplication defined above provides a structure of a graded ring with a unit on
\[\bigoplus \limits_{m \geq 0} H^0(\mathbb{Z}F^{cob}(pt,\mathbb{G}_m^{\wedge m}) ),\]

 and the maps $\Hcob$ provide a homomorphism of graded rings with a unit

\[
\bigoplus \limits_{m \geq 0} H^0(\mathbb{Z}F(pt,\mathbb{G}_m^{\wedge m}) ) \to \bigoplus \limits_{m \geq 0} H^0(\mathbb{Z}F^{cob}(pt,\mathbb{G}_m^{\wedge m}) )
\]
\end{corollary}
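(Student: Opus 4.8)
The plan is to build the internal multiplication by internalising the external product. Specialise the external multiplication established above (Definition~\ref{d:external_product} and the lemma following it) to $X=X'=\pt$, $Y=\Gm^{\{1\dots m\}}$, $Y'=\Gm^{\{1\dots m'\}}$. It produces a pairing into $H^0\big(C_*\mathbb{Z}F^{\cob}(\pt\times\pt,\Gm^{\{1\dots m\}}\times\Gm^{\{1\dots m'\}})\big)$, and I would then use the canonical identifications $\pt\times\pt\cong\pt$ and $\Gm^{\{1\dots m\}}\times\Gm^{\{1\dots m'\}}\cong\Gm^{\{1\dots m+m'\}}$ (concatenating the $\Gm$-factors) to view it as a pairing between the $\Gm^{\times\bullet}$-graded pieces. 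To descend to the wedge-graded pieces $H^0\big(C_*\mathbb{Z}F^{\cob}(\pt,\Gm^{\wedge m})\big)$, I apply the projector $\prod_{i}(1-e_i)$ defining the smash summand.

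The one step with real content is checking that this descent is compatible with the external product. The idempotents $e_i$ act through endomorphisms of the $Y$-side that collapse a single $\Gm$-factor, so they act factorwise; and the external product is equivariant for this factorwise action. Concretely, for $i\le m$ one has $e_i(c)\times c'=e_i(c\times c')$, and for $j\le m'$ one has $c\times e_j(c')=e_{m+j}(c\times c')$, since on the combined target $\Gm^{\{1\dots m+m'\}}$ the endomorphism inducing $e_i$ on a factor external-multiplies to the endomorphism collapsing the corresponding factor. Hence $\big(\prod_{i=1}^m(1-e_i)\,c\big)\times\big(\prod_{j=1}^{m'}(1-e_j)\,c'\big)=\prod_{i=1}^{m+m'}(1-e_i)\,(c\times c')$, so the external product of two classes lying in the respective smash summands lands in the smash summand of degree $m+m'$. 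This yields a well-defined internal product, homogeneous of the expected degree.

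The remaining ring axioms I would verify exactly as in the framed case treated in \cite{Milnor-Witt}[Section~3]. Associativity follows from the coherence (reassociation) isomorphisms among the total spaces $\tau_{\bullet,\bullet}$ together with the swap maps $T$ appearing in Definition~\ref{d:external_product}, which make the two ways of forming a triple product agree up to an isomorphism inducing the identity on $H^0$. The unit is the class in degree $0$ of the level-$0$ identity correspondence $(\pt,\pt,\id,\id)$, external multiplication by which is the identity once the stabilisation and homotopy identifications are taken into account. These are formal verifications parallel to the framed setting, and I would only record the coherences involved rather than write out the bookkeeping.

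Finally, that $\bigoplus\Hcob$ is a homomorphism of graded rings with unit is then essentially formal. The transformation $\widetilde{\Hcob}_{-,-}$ respects external products by Lemma~\ref{l:H0-cob_external_multiplication}; moreover it is natural in $Y$, hence commutes with each endomorphism-induced idempotent $e_i$, and therefore with the projectors $\prod_i(1-e_i)$ cutting out the smash summands. Combining these two facts shows that the internal products on the two graded groups correspond under $\bigoplus\Hcob$, and the degree-$0$ unit is preserved because $\cob$ sends the level-$0$ identity correspondence to the level-$0$ identity cobordism-framed correspondence. I expect the factorwise-equivariance step of the second paragraph to be the main obstacle, with the associativity coherences as the only other point requiring care; once these are in place, the statement about $\Hcob$ is immediate from naturality.
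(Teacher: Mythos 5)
Your proposal is correct and follows essentially the same route as the paper: the ring structure and the compatibility of $\Hcob$ with products are drawn from Definition~\ref{d:external_product} and Lemma~\ref{l:H0-cob_external_multiplication}, with the unit being the identity map $\pt \to \pt$, which $\cob$ sends to itself. The paper's own proof records only this unit check and treats everything else (including the descent to the smash summands via the idempotents $\prod_i(1-e_i)$, which you rightly single out and verify by factorwise equivariance) as formal consequences of the preceding lemmas, so your write-up is simply a more detailed account of the same argument.
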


\begin{proof}
The unit is the identity map $pt \to pt.$ It goes to itself under $cob.$
\end{proof}

\section{Proof of the main result} \label{proofMain}

\begin{plan}
In this section we will show that the $\sigma_m$ of Theorem~\ref{main} are isomorphisms

\[\sigma_m: K^M_m(k) \simeq H^0(C_* \mathbb{Z}F^{cob}(pt,\mathbb{G}_m^{\wedge m})).\]

 The proof follows the same general plan as Voevodsky's proof for the correspondences $Cor,$ and Neshitov's proof for correspondences $\mathbb{Z}F.$ 

First we check in Proposition~\ref{sigma} that the maps introduced in the statement of Theorem~\ref{main} are well-defined and provide a graded ring homomorphism.

After that, in Proposition~\ref{surj} we show that the maps $\sigma_m$ are onto.

Then, for each $m$, in Proposition \ref{rho_m} we give a well-defined map
$$
\rho_m \colon H^0\big(C_* \mathbb{Z}F^{\cob}(\pt,\mathbb{G}_m^{\wedge m})\big) \to K^M_m(k).
$$

These maps go in the opposite direction of $\sigma_m$:

$$
\sigma_m\colon  K^M_m(k) \to H^0\big(C_* \mathbb{Z}F^{\cob}(\pt,\mathbb{G}_m^{\wedge m})\big),
$$

 and $\rho_m$ is a candidate for the inverse map to $\sigma_m.$ It is easy to see from the definitions that $\rho_m \circ \sigma_m = \id.$ The same is not clear for the other composition $\sigma_m \circ \rho_m,$ however, $\sigma_m$ being onto, these two maps turn out to be mutually inverse isomorphisms.

\end{plan}

\begin{proposition} \label{sigma}
Taking the symbol $\{g_1, \cdots, g_m\}$ to the level $0$ correspondence, specifically a map, 

\[\tilde{\sigma}_m(g_1, \cdots, g_m) : pt \to \mathbb{G}_m^m,\] 

given by the coordinates $(g_1, \cdots , g_m),$ and taking that to its class in $H^0(C_* \mathbb{Z}F^{cob}(pt,\mathbb{G}_m^{\wedge m}))$ (first by stabilisation, then by passing drom $\mathbb{G}^m$ to $\mathbb{G}^{\wedge m},$ and, finally, by homotopy), gives a well-defined homomorphism of abelian groups

\[\sigma_m: K_m^M(k) \to H^0(C_* \mathbb{Z}F^{cob}(pt,\mathbb{G}_m^{\wedge m})).\]

 Together these homomorphisms form a graded ring homomorphism:

\[\oplus \sigma_m: \bigoplus K_m^M(k) \to \bigoplus H^0(C_* \mathbb{Z}F^{cob}(pt,\mathbb{G}_m^{\wedge m})).\]

\end{proposition}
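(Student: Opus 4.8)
The plan is to verify that the classes $\widetilde{\sigma}_m(g_1,\dots,g_m)$ respect the two families of defining relations of Milnor $K$-theory---multilinearity in each argument and the Steinberg relation---so that $\widetilde{\sigma}_m$ factors through $K^M_m(k)$; the graded ring structure is then read off separately from the external product. I would first dispose of multiplicativity, as it is essentially formal: by Definition~\ref{d:external_product}, at level $0$ the Grassmannian datum is trivial, so the external product of the maps $\widetilde{\sigma}_m(g_1,\dots,g_m)\colon\pt\to\Gm^m$ and $\widetilde{\sigma}_{m'}(h_1,\dots,h_{m'})\colon\pt\to\Gm^{m'}$ is simply the map with concatenated coordinates, i.e.\ $\widetilde{\sigma}_{m+m'}(g_1,\dots,g_m,h_1,\dots,h_{m'})$. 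Passing to $\Gm^{\wedge(m+m')}$, and using that the external multiplication endows $\bigoplus H^0\big(C_*\mathbb{Z}F^{\cob}(\pt,\Gm^{\wedge m})\big)$ with a graded ring structure with unit (Corollary~\ref{cob-gr}), the family $\oplus\sigma_m$ is multiplicative on symbols; hence, once well-definedness is in hand, it is a homomorphism of graded rings. By biadditivity of both the $K^M$-product and the external product, this extends from symbols to all elements.

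The substantive point is multilinearity, and by the multiplicativity just described it suffices to treat $m=1$, i.e.\ to prove $\sigma_1(gg')=\sigma_1(g)+\sigma_1(g')$ in $H^0\big(C_*\mathbb{Z}F^{\cob}(\pt,\Gm^{\wedge 1})\big)$. I would exhibit an explicit homotopy over $\Delta^1_k\simeq\mathbb{A}^1$ with parameter $s$: take the family of monic quadratics $t^2-e_1(s)\,t+gg'$, where $e_1(s)$ interpolates linearly from $g+g'$ at $s=0$ to $gg'+1$ at $s=1$, together with the value map $t\mapsto t\in\Gm$. Since the product of the two roots is constantly $gg'\neq 0$, the support never meets $0$, and the family is a genuine cobordism-framed correspondence over $\Delta^1$ (the framing condition being only set-theoretic, so intermediate root collisions are harmless). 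At $s=0$ the support is $\{g,g'\}$, cut out transversally, so Proposition~\ref{reduce_to_value} identifies the endpoint with $\sigma_1(g)+\sigma_1(g')$; at $s=1$ the support is $\{gg',1\}$, giving $\sigma_1(gg')+\sigma_1(1)$, and $\sigma_1(1)=0$ because the value $1$ is the basepoint of $\Gm^{\wedge 1}$ and is annihilated by $1-e_1$. The degenerate endpoint $g=g'$ I would dispatch by a standard genericity argument. The general multilinear relation in each slot then follows by taking external products with the remaining symbols.

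For the Steinberg relation I would avoid redoing the geometry by importing it through the comparison map. Since $\cob$ fixes level-$0$ maps, the cobordism symbol is the image of the framed symbol, so $\sigma_2^{\cob}(a,1-a)=\Hcob\big(\sigma_2^{F}(a,1-a)\big)$, and $\Hcob$ is a homomorphism of graded rings (Corollary~\ref{cob-gr}). By Neshitov's theorem~\cite{Milnor-Witt} the framed symbol $\sigma_2^{F}(a,1-a)$ corresponds to $[a]\cdot[1-a]$, which vanishes by the Steinberg relation in $K^{MW}_2(k)$; hence $\sigma_2^{\cob}(a,1-a)=\Hcob(0)=0$, and the higher Steinberg relations follow by external product with the remaining symbols. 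With multilinearity and Steinberg verified, $\widetilde{\sigma}_m$ descends to a well-defined $\sigma_m\colon K^M_m(k)\to H^0\big(C_*\mathbb{Z}F^{\cob}(\pt,\Gm^{\wedge m})\big)$, and the multiplicativity of the first paragraph makes $\oplus\sigma_m$ a graded ring homomorphism.

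I expect the main obstacle to be the multilinearity homotopy: guaranteeing that the quadratic family remains a legitimate correspondence with finite support inside $\Gm$ and correct framing, and---more importantly---correctly invoking Proposition~\ref{reduce_to_value} so that no $\langle\,\cdot\,\rangle$-twist appears in the endpoint fibers. This vanishing of the twist rests on Lemma~\ref{NoDiff} (independence of the class from the differential of the framing map), and it is exactly the feature that separates the cobordism case from the Milnor--Witt case, forcing the target to be $K^M$ rather than $K^{MW}$.
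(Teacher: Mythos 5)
Your overall route coincides with the paper's own proof: your multilinearity homotopy (the monic quadratic with constant term $gg'$ whose linear coefficient interpolates from $-(g+g')$ to $-(gg'+1)$) is literally the polynomial the paper uses, with the endpoints identified via Proposition~\ref{reduce_to_value} in both cases; the Steinberg relation is likewise imported through $\Hcob$ from the framed/Milnor--Witt computation; and your reduction of multilinearity to $m=1$ by external products is a harmless reorganization of the paper's slot-$i$ argument.

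The one genuine gap is in your Steinberg step, at the sentence ``by Neshitov's theorem the framed symbol $\sigma_2^{F}(a,1-a)$ corresponds to $[a]\cdot[1-a]$.'' That is not quotable: Neshitov's Steinberg relation (\cite{Milnor-Witt}, 8.9) and his map $\Psi_m$ are expressed in terms of the \emph{level-one} correspondences $[x-a]$, not level-zero maps, so what you actually need is that $\const_a$ and $[x-a]$ have the same class in $H^0(C_*\mathbb{Z}F(\pt,\Gm))$. This is not formal, and in the framed world it cannot come from a ``no differential'' argument such as Lemma~\ref{NoDiff}, because framed classes do remember the differential of the framing --- that is exactly why the framed answer is $K^{MW}$ rather than $K^M$. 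A priori the framed class of $\const_a$ could be $[a]$ plus an $\eta$-divisible correction, and then its product with the class of $\const_{1-a}$ would have no reason to vanish. The paper closes precisely this gap with a separate statement, Lemma~\ref{same}, proved by two explicit $\mathbb{A}^1$-homotopies connecting the stabilization of $\const_a$ to $[x-a]$. Alternatively, you could stay within the tools you already cite: apply $\Hcob$ to the level-one product first (legitimate, since $\Hcob$ is a ring homomorphism by Corollary~\ref{cob-gr} and the product $[x-a]\cdot[x-(1-a)]$ vanishes in the framed group by Neshitov), and separately identify $\Hcob([x-a])$ with the class of the map $\const_a$ by Proposition~\ref{reduce_to_value}, whose hypotheses ($Z$ a single rational point cut out transversally) the correspondence $\cob([x-a])$ satisfies. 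Either patch completes your argument; as written, this step is unsupported.
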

\begin{proof}
From the construction of the multiplication operation on

 $$\bigoplus H^0(C_* \mathbb{Z}F^{cob}(pt,\mathbb{G}_m^{\wedge m})),$$

 it is obvious that taking the noncommutative monomial  $g_1 \dots g_m$ to the level $0$ correspondence, i.e. map, 

\[\tilde{\sigma}_m(g_1 \dots g_m) : pt \to \mathbb{G}_m^m,\]

gives a homomorphism of (noncommutative) graded rings 

\[ \oplus \tilde{\sigma}_m : \mathbb{Z}\{|k^*|\} \to \bigoplus H^0(C_* \mathbb{Z}F^{cob}(p,\mathbb{G}_m^{\wedge m})) .\]

(multiplication in the ring of noncommutative polynomials is denoted by concatenation, and in $k^*$ --- with the dot $\cdot$).

If we check that the kernel of this homomorphism includes the noncommutative polynomials corresponding to the relations of linearity along each coordinate and the Steinberg relations (these are the relations for the Milnor $K$-theory),  one can see that this homomorphism can be be factored through $\bigoplus K_m^M(k),$ and it follows from the definition that it factors into the homomorphism $\oplus \sigma_m,$ which, in particular, is well-defined. First check the multilinearity:

 \[\tilde{\sigma}_m(g_1 \cdots g_i'\cdots g_m) + \tilde{\sigma}_m(g_1 \cdots g_i''\cdots g_m) =\]

 \[ \tilde{\sigma}_m(g_1 \cdots (g_i' \cdot g_i'')\cdots g_m) + \tilde{\sigma}_m(g_1 \cdots 1 \cdots g_m) .\]

 For this construct a homotopy between the polynomials giving a pair of points $g_i'$ and $g_i''$ and the pair $g_i' \cdot g_i''$ and $1.$ It is given by the polynomial 

\[x^2+(-g_i'-g_i''+t(g_i'+g_i''-g_i' g_i'' -1)) +g_i' g_i''.\]

 Considered as a function on $\mathbb{A}^1 \times \mathbb{A}^1$, it gives a finite over $\mathbb{A}^1$ (along $t$) set $Z$ and an invertible function $x$ on its neighbourhood. By Proposition \ref{reduce_to_value}, the zero- and unit section of this homotopy are equivalent to the left and right sides of the equality.

The Steinberg relations are already true in the Milnor---Witt $K$-theory. In \cite{Milnor-Witt}[8.9] these relations are proven to hold between level $1$ correspondences $[x-a]$ in 

$$H^0(C_* \mathbb{Z}F(pt,\mathbb{G}_m^{\wedge 1})).$$

 By Proposition \ref{cob-gr}, $\Hcob$ gives a homomorphism of graded rings with a unit. Hence the same correspondences hold for the images of these elements in $H^0(C_* \mathbb{Z}F^{cob}(pt,\mathbb{G}_m^{\wedge 1})).$ On the other hand, by Lemma \ref{same} below, the classes of these correspondences are equal to the classes of the maps, or level $0$ correspondences, $\tilde{\sigma}_1(a),$

\end{proof}
\begin{lemma}  \label{same}

Let $[x-a]$ be the correspondence implicitly defined in \cite[Lemma 6.3]{Milnor-Witt}, i.e. the level $1$ framed correspondence $pt \to \mathbb{G}_m$ given by the data 

\[\begin{gathered}
(X=pt,Y=\mathbb{G}_m,Z=pt, id: Z \to X, W=\mathbb{A}^1 \setminus \{0\},\\
W \xrightarrow{i} \mathbb{A}^1, Z \xrightarrow{(a)} W, \phi = (x-a): W \to \mathbb{A}^1, g=id:W \to Y.) 
\end{gathered}\]

Then in $H^0(C_*\mathbb{Z}F(pt,\mathbb{G}_m))$  it has the same class as the map $const_a :pt \to \mathbb{G}_m$
\end{lemma}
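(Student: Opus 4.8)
The plan is to show that the level $1$ correspondence $[x-a]$ and the constant map $\const_a \colon \pt \to \Gm$ become identified in $H^0(C_* \mathbb{Z}F(\pt,\Gm))$ by interpreting this as a special case of the reduction-to-value principle already established on the cobordism side, and then transporting it back. However, since the statement lives on the framed side $\mathbb{Z}F$ rather than $\mathbb{Z}F^{\cob}$, the cleaner route is to argue directly with an explicit homotopy. First I would record that $[x-a]$ has support $Z = \{a\} \simeq \Spec(k)$, neighbourhood $W = \mathbb{A}^1 \setminus \{0\}$, framing $\phi = (x-a)$ cutting out $Z$ transversally (since the derivative of $x-a$ is $1$), and structure map $g = \id \colon W \hookrightarrow \Gm$. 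The value of $g$ on $Z$ is exactly $a$, so $g|_Z = \const_a$. The goal is to produce a homotopy in $C_* \mathbb{Z}F(\pt \times \Delta^1, \Gm)$ connecting $[x-a]$ to the level $0$ correspondence $(\Spec k, \Spec k, \id, \const_a)$.

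The main step is the translation homotopy. I would consider the family over $\mathbb{A}^1_t$ given by support $Z_t = \{a t\}$ (or $\{a(1-t)\}$, depending on orientation), neighbourhood $\mathbb{A}^1 \times (\mathbb{A}^1 \setminus \{0\})$ suitably arranged, framing $\phi_t = x - a t$, and $g = \id$. This sweeps the point $a$ to $0$; but one must be careful, because at $t$ where $at = 0$ the support would hit the removed origin $\{0\}$ and leave $\Gm$. The way Neshitov handles this in \cite[Lemma 6.3]{Milnor-Witt}, which I would invoke, is precisely to exhibit such a homotopy within $\Gm$, using that $\Gm$ is a suitable open subvariety and that one may shrink $W$ so the moving support stays inside $\Gm$; the endpoint is the level $0$ map hitting the value of $g$ at the final position of the support. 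After the framing is homotoped (by the multilinearity/Gaussian-elimination type argument, scaling the leading coefficient $\mu$ to $1$) and the point translated, one lands on the correspondence of level $0$ carrying the single value, which is $\const_a$.

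Concretely, the most economical proof simply cites the ambient machinery: the correspondence $[x-a]$ satisfies exactly the hypotheses of the reduction result — $Z \simeq \Spec(k)$ a single rational point cut out transversally by the zero-section, $Y = \Gm$ an open subvariety of affine space, and $g|_Z = \const_a$. Thus by the same reasoning as in Proposition~\ref{reduce_to_value} (applied here on the framed side, whose analogue is \cite[Lemma 6.3]{Milnor-Witt}), the class of $[x-a]$ in $H^0(C_* \mathbb{Z}F(\pt,\Gm))$ equals the class of $\const_a$. I would then remark that $\widetilde{\Hcob}$ carries this identity to the corresponding identity for cobordism-framed correspondences, which is what Proposition~\ref{sigma} uses.

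The hard part, or rather the only subtlety requiring care, is ensuring the moving support never exits $\Gm = \mathbb{A}^1 \setminus \{0\}$ during the translation homotopy, so that the structure map $g = \id$ remains well-defined with values in $\Gm$ throughout, and that the framing continues to cut out the support transversally at every parameter value. This is exactly the point where one leans on the specific construction in \cite[Lemma 6.3]{Milnor-Witt} rather than a naive linear translation, since a naive translation of the root $a \to 0$ would pass through the deleted origin. Everything else — transversality of $x - a$, the value computation $g|_Z = a$, and the additivity behaviour — is routine.
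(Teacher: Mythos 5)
Your proposal has a genuine gap at exactly the point you yourself flag as ``the hard part,'' and the fix is not the one you suggest. You keep $g=\id$ fixed throughout the translation homotopy and hope to repair the problem of the moving support by ``shrinking $W$'' or by invoking \cite[Lemma 6.3]{Milnor-Witt}. Neither can work. The homotopy must terminate at the stabilisation $\beta$ of $\const_a$, which is the explicit correspondence $(Z=\{0\},\,W=\mathbb{A}^1,\,\phi=x,\,g\equiv a)$: its support sits at the origin and its $g$ is \emph{constant}. A homotopy whose $g$ is the coordinate inclusion in every fiber restricts at each parameter value to a correspondence whose $g$ is again the inclusion; such a correspondence is never equivalent (on any \'etale refinement) to $\beta$, and if its support ever reached $0$ the datum would not even be defined, since $W$ must avoid $\{g=0\}$. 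Moreover, per the paper itself, \cite[Lemma 6.3]{Milnor-Witt} is merely where $[x-a]$ is (implicitly) defined; if it contained the identification with $\const_a$, Lemma \ref{same} would be a citation rather than a proof. The missing idea --- the actual content of the paper's proof --- is to deform $g$ \emph{simultaneously} with the support so that the value of $g$ on the moving support is constantly $a$. Concretely, the paper's first homotopy $H_1$ has support $\{x=at\}$, $g=x-at+a$, and $W=(\mathbb{A}^1\times\mathbb{A}^1)\setminus\{g=0\}$; at $t=1$ this is $[x-a]$ and at $t=0$ it is $\alpha=(\{0\},\,\mathbb{A}^1\setminus\{x=-a\},\,\phi=x,\,g=x+a)$. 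A second homotopy $H_2$ with $g=tx+a$ then connects $\alpha$ to $\beta$. Since $g$ equals $a$ on the support at all times, the support never meets $\{g=0\}$ and no shrinking is needed.

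Your ``economical'' route through Proposition \ref{reduce_to_value} does not close the gap either, for a direction-of-transport reason. That proposition is a statement about $H^0(C_*\mathbb{Z}F^{\cob}(\pt,Y))$, whereas Lemma \ref{same} is asserted in $H^0(C_*\mathbb{Z}F(\pt,\Gm))$; the comparison map $\widetilde{\Hcob}$ goes from the framed side \emph{to} the cobordism side, so a cobordism-side identity cannot be carried back. Worse, the framed-side ``analogue'' of Proposition \ref{reduce_to_value} that you invoke is false in general: that proposition's proof rests on Lemma \ref{NoDiff} (homotoping the derivative of the framing to $1$), which is special to cobordism framings --- on the framed side a transversal framing $\mu(x-a)$ yields a class twisted by $\langle\mu\rangle$, which is precisely the discrepancy between $K^{MW}$ and $K^{M}$ that this paper exploits. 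For the particular correspondence $[x-a]$ the derivative is already $1$, but then ``applying the same reasoning on the framed side'' amounts to assuming the statement being proved. What \emph{is} true is that Proposition \ref{reduce_to_value} directly yields the weaker cobordism-side equality $\widetilde{\Hcob}([x-a])=[\const_a]$ in $H^0(C_*\mathbb{Z}F^{\cob}(\pt,\Gm))$, which is arguably all that Propositions \ref{sigma} and \ref{surj} need; but that is not the lemma as stated, and the paper proves the stronger framed-side identity by the explicit two-step homotopy described above.
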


\begin{proof}
Applying the stabilisation map to $const_a$, we get a level $1$ correspondence  

\[\begin{gathered} 
\beta= (X=pt,Y=\mathbb{G}_m,Z=pt, id: Z \to X, W=\mathbb{A}^1, W \xrightarrow{id} \mathbb{A}^1, \\
Z \xrightarrow{(0)} W, \phi = (x): W \to \mathbb{A}^1, g=const_a:W \to Y.)
\end{gathered}\]

Construct two homotopies. The first is given by the data

\[ \begin{gathered} 
H_1 = (X=\mathbb{A}^1,Y=\mathbb{G}_m, Z=\mathbb{A}^1, id: Z \to X, W=\mathbb{A}^1 \times \mathbb{A}^1 - \{x-at+a = 0\},\\
W \xrightarrow{i} \mathbb{A}^1 \times \mathbb{A}^1, Z \xrightarrow{(t,at)} W, \phi = t-x-a: W \to \mathbb{A}^1, g= x-at+a: W \to \mathbb{G}_m ).
\end{gathered}\]

(Here $t$  is the homotopy coordinate, both on $X$ and $Z$, and $x$ is the second coordinate on $W$, i.e. the one giving the coordinate function on the fibers $W \to X.$) A computation gives $H_1 \circ i_1 = [x-a].$

$H_1 \circ i_0 = \alpha$ is given by the data 

\[\begin{gathered}
(X=pt,Y=\mathbb{G}_m,Z=pt, id: Z \to X, W=\mathbb{A}^1 -\{x=-a\},\\
W \xrightarrow{i} \mathbb{A}^1, Z \xrightarrow{(0)} W, \phi = (x): W \to \mathbb{A}^1, g=x+a:W \to Y.)
\end{gathered}\]

The second homotopy is
 
\[\begin{gathered}
H_2=(X=\mathbb{A}^1,Y=\mathbb{G}_m, Z=\mathbb{A}^1, id: Z \to X, W=\mathbb{A}^1 \times \mathbb{A}^1 - \{tx = -a\}, \\
W \xrightarrow{i} \mathbb{A}^1 \times \mathbb{A}^1, Z \xrightarrow{(t,0)} W, \phi = x: W \to \mathbb{A}^1, g= tx+a: W \to \mathbb{G}_m ).
\end{gathered}\]

A computation shows that $H_2 \circ i_0 = \beta, H_2 \circ i_1 = \alpha.$  Thus, with the two homotopies, we have connected the two parts of the desired equality.
\end{proof}

$\sigma_m$ is obviously a right inverse to $\rho_m.$  it remains to prove the following:
\begin{proposition} \label{surj}
The map $\sigma_m$ is onto for each $m.$ 
\end{proposition}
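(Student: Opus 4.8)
The plan is to derive the surjectivity of $\sigma_m$ from three facts already at our disposal: that $\bigoplus_m \Hcob$ is an \emph{onto} homomorphism of graded rings (Corollaries~\ref{H0-cob} and~\ref{cob-gr}), that $\bigoplus_m \sigma_m$ is itself a ring homomorphism (Proposition~\ref{sigma}), and Neshitov's computation of the framed groups~\cite{Milnor-Witt}. Since the image of a ring homomorphism is a subring, it suffices to exhibit a set of ring generators of the source $\bigoplus_m H^0(C_*\mathbb{Z}F(\pt,\mathbb{G}_m^{\wedge m}))$ whose images under $\bigoplus_m \Hcob$ all lie in the image of $\bigoplus_m \sigma_m$: the image of $\bigoplus_m \sigma_m$ will then be a subring containing a generating set of the image of the onto map $\bigoplus_m \Hcob$, hence will be everything, and restricting to a fixed degree yields surjectivity of each $\sigma_m$.

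First I would use Neshitov's isomorphism $\bigoplus_m H^0(C_*\mathbb{Z}F(\pt,\mathbb{G}_m^{\wedge m})) \cong \bigoplus_m K_m^{MW}(k)$ together with the fact that the non-negatively graded Milnor--Witt ring is generated, as a ring, by its parts of degree $0$ and $1$. The degree-$1$ part is additively generated by the symbols $[u]=[x-u]$ for $u\in k^*$, and the degree-$0$ part $K_0^{MW}(k)=GW(k)$ is generated as a ring by the classes $\langle u\rangle$, $u\in k^*$; the reduction of an arbitrary homogeneous generator $\eta^q[u_1]\cdots[u_{m+q}]$ to products of these uses the relation $\eta[u]=\langle u\rangle-1\in K_0^{MW}(k)$ repeatedly to absorb the factors of $\eta$ into degree-$0$ coefficients.

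Next I would compute $\Hcob$ on these two families. On a degree-$1$ symbol, Lemma~\ref{same} identifies $[x-u]$ with the level-$0$ correspondence $\const_u$ inside the framed group, and the map $\cob$ sends the latter to the map defining $\sigma_1(u)$; thus $\Hcob([x-u])=\sigma_1(u)$, which lies in the image of $\sigma_1$. On a degree-$0$ generator $\langle u\rangle$ --- represented by a framed correspondence supported at a single transversal rational point whose framing has Jacobian $u$ --- the cobordism class forgets this Jacobian: applying Lemma~\ref{NoDiff} with $M=(u^{-1})\in GL_1(k)$ rescales the framing to the identity, so that $\Hcob(\langle u\rangle)=\Hcob(\langle 1\rangle)=1=\sigma_0(1)$, again in the image of $\sigma_0$. (Proposition~\ref{reduce_to_value} gives the same conclusion.) With both generator families accounted for, the argument of the first paragraph completes the proof.

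The main obstacle is precisely the degree-$0$ computation $\Hcob(\langle u\rangle)=1$. This is the concrete shadow of the fact that in the oriented, cobordism-framed world the Hopf element $\eta$ acts as zero, so that the Milnor--Witt coefficient ring $GW(k)$ collapses onto the Milnor coefficients $\mathbb{Z}=K_0^M(k)$; everything else in the proof is formal. Carrying it out rigorously requires pinning down the explicit framed representative of $\langle u\rangle$ and verifying that Lemma~\ref{NoDiff} (equivalently Proposition~\ref{reduce_to_value}) genuinely applies to it, which is where I would concentrate the care.
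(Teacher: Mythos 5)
Your proof is correct, and it rests on the same three pillars as the paper's own argument: surjectivity of $\Hcob$ (Corollary~\ref{H0-cob}), Neshitov's computation of the framed groups, and Lemma~\ref{same}, glued together by the ring structures of Proposition~\ref{sigma} and Corollary~\ref{cob-gr}. The difference lies in how the non-symbol part of $K^{MW}$ is handled. The paper introduces $\can_m\colon K_m^{MW}\to K_m^M$ and the commutative square $\Hcob\circ\Psi_m=\sigma_m\circ\can_m$; since for $m\ge 1$ the group $K_m^{MW}(k)$ is \emph{additively} generated by pure symbols $[u_1]\cdots[u_m]$ (the relation $\eta[u][v]=[uv]-[u]-[v]$ eliminates every factor of $\eta$ as long as the total degree stays positive), commutativity need only be checked on such symbols, where it is exactly Lemma~\ref{same} plus multiplicativity; surjectivity of $\sigma_m$ then falls out of the diagram because the top arrow $\Psi_m$ and the right arrow $\Hcob$ are both onto. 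You instead generate the source as a \emph{ring} by its degree-$0$ and degree-$1$ parts, which forces you to compute $\Hcob$ on $\langle u\rangle$ as well, i.e.\ to prove the collapse $GW(k)\twoheadrightarrow\mathbb{Z}$ on the cobordism side; your treatment of this via Lemma~\ref{NoDiff} (rescaling the framing $ux$ by $M=(u^{-1})\in GL_1(k)$, or equivalently invoking Proposition~\ref{reduce_to_value}) is correct, and it isolates the real conceptual point: in the cobordism-framed world the Jacobian of the framing, hence $\eta$, becomes invisible. What the paper's route buys is precisely that this degree-$0$ computation --- and with it the dependence on the explicit framed representative of $\langle u\rangle$ in Neshitov's isomorphism, which you rightly flag as the delicate external input --- is never needed for $m\ge 1$: symbol generation keeps the whole check in degrees where Lemma~\ref{same} does all the work. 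What your route buys is an explicit explanation of why the Milnor--Witt coefficients degenerate to Milnor ones under $\Hcob$, a fact the paper's proof leaves implicit (and which its square would in any case require if one insisted on the case $m=0$).
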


\begin{proof}
Denote $can_m:K_m^{MW} \to K_m^M.$ From Lemma \ref{same}, it follows, in the notation of \cite{Milnor-Witt}[8.3], that there is the following commutative square:

\[
\begin{tikzcd}
K_m^{MW} \arrow[r,twoheadrightarrow,"\Psi_m"] \arrow[d,"can_m"] & H^0(C_* \mathbb{Z}F(pt , \mathbb{G}_m^{\wedge m}) \arrow[d,twoheadrightarrow,"\Hcob"]\\
K_m^M \arrow[r,"\sigma_m"] & H^0(C_* \mathbb{Z}F^{cob}(pt,\mathbb{G}_m^{\wedge m}))  
\end{tikzcd}
\]

$\Hcob$ is onto by Corollary \ref{H0-cob}, $\Psi_m$ is onto by the main result of \cite{Milnor-Witt}. Hence $\sigma_m$ is onto.
\end{proof}

First let's construct a map

\[\rho_{m,n,N} : Fr_{n,N}^{cob} (pt,\mathbb{G}_m^{\times m}) \to K_m^M.\]

 Take a correspondence $(Z,W,\phi,g).$ Since $Z$ consists of a finite number of points (call them $z_1, \cdots, z_k$), 

\[\phi^{-1}(s_0(Gr_{n,N}))\]

 is a scheme which in the neighbourhood of each of these $z_i$ is the spectrum of a local Artin ring of length $d_i$. On $W$, and, in particular, on $Z$, $m$  invertible functions $g_1, \dots, g_m$  are given. If $z_i \simeq Spec(F_i),$ then in $K_m^M(F_i)$ there is a symbol $\{g_1|_{z_i},\cdots,g_m|_{z_i}\}$. Taking the sum of norms of these symbols, multiplied by $d_i$ 

\begin{equation} \label{rho.def}
\sum \limits_{i=1}^k d_i \Tr\big|^{F_i}_k \left(\{g_1|_{z_i},\cdots,g_m|_{z_i}\}\right),
\end{equation}
 we get an element in the Milnor $K$-theory of the field $k.$ It can easily be seen that the map defined this way easily translates in a well-defined way to $Fr_{n,N}^{cob} (pt,\mathbb{G}_m^{\wedge m}),$ since any simbol with $1$ in it is equal to $0.$ It is also easily seen to be compatible with the stabilization maps along $n$ and $N,$ and with the extra additivity relations on $\mathbb{Z}F^{cob}(pt,\mathbb{G}_m^{\wedge n}).$ Thus, the maps $\rho_{m,n,N}$ give rise to a homomorphism

\[\mathring{\rho}_m: \mathbb{Z}F^{cob} (pt,\mathbb{G}_m^{\wedge m}) \to K_m^M.\]

\begin{note}\label{basis}
For any $m\in \mathbb N,$ and any $g_1,\dots, g_m\in {\mathbb G}_m(k)$:
$$
\mathring{\rho}_m( \widetilde\sigma_m (g_1\dots g_m))=\{g_1,\dots, g_m\}.
$$
\end{note}

\begin{proposition} \label{rho_m}

The map $\mathring{\rho}_m$ gives rise in a well-defined way to the map
\[\rho_m: H^0(C_* \mathbb{Z}F^{cob}(pt,\mathbb{G}_m^{\wedge m})) \to K_m^M(k).\]

\end{proposition}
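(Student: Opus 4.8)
The plan is to read the claim as a homotopy-invariance statement for $\mathring\rho_m$ and to deduce it from Weil reciprocity for Milnor $K$-theory on a smooth projective curve (equivalently, from Rost's cycle-module formalism).

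First I would isolate the essential point. The group $H^0\big(C_*\mathbb{Z}F^{cob}(\pt,\mathbb{G}_m^{\wedge m})\big)$ is the cokernel of $\partial_0-\partial_1\colon \mathbb{Z}F^{cob}(\Delta^1_k,\mathbb{G}_m^{\wedge m})\to \mathbb{Z}F^{cob}(\Delta^0_k,\mathbb{G}_m^{\wedge m})$, where $\Delta^0_k=\pt$, $\Delta^1_k\cong\mathbb{A}^1$, and $\partial_0,\partial_1$ are restriction to the vertices $t=0,\,t=1$. Since $\mathring\rho_m$ is already a homomorphism on $\mathbb{Z}F^{cob}(\pt,\mathbb{G}_m^{\wedge m})$, it descends to $\rho_m$ exactly when $\mathring\rho_m(\partial_0 h)=\mathring\rho_m(\partial_1 h)$ for every homotopy $h\in \mathbb{Z}F^{cob}(\mathbb{A}^1,\mathbb{G}_m^{\wedge m})$. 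By additivity I reduce to a single generator $h=(Z,W,\phi,g)$; and since $\mathring\rho_m$ is defined through $\mathbb{G}_m^{\times m}$ with the passage to the summand $\mathbb{G}_m^{\wedge m}$ compatible with the differential, I may take $g=(g_1,\dots,g_m)$ with each $g_i$ a global unit on $W$.

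Next I would pin down the geometry of the support. Writing $\phi$ as $(\mathcal O_W^N\twoheadrightarrow E,s)$ with $E$ locally free of rank $n$, the scheme $\tilde Z=\{s=0\}$ is the zero locus of a section of a rank-$n$ bundle on the smooth $(n+1)$-dimensional $W$. By Krull's height theorem every component of $\tilde Z$ has codimension $\le n$, hence dimension $\ge 1$; together with finiteness of $Z$ over $\mathbb{A}^1$ this forces $\tilde Z$ to be pure of dimension $1$. Thus $s$ is a regular section, $\tilde Z$ is a local complete intersection and in particular Cohen--Macaulay, and miracle flatness makes $\pi\colon\tilde Z\to\mathbb{A}^1$ finite and flat. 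On $\tilde Z$ all the $g_i$ are invertible, so the top cycle $\zeta=\sum_{\eta}\operatorname{length}(\mathcal O_{\tilde Z,\eta})\,\{g_1,\dots,g_m\}_{k(\eta)}$, summed over the generic points $\eta$ of $\tilde Z$, has all its tame symbols at closed points equal to zero (every entry is a unit): it is a cycle with coefficients in $K^M_m$, i.e. a class in $A_1(\tilde Z;K^M_m)$. By construction $\mathring\rho_m(\partial_{t_0} h)$ is the Gysin pullback along $\{t_0\}\hookrightarrow\mathbb{A}^1$ of the proper pushforward $\pi_*\zeta\in A_1(\mathbb{A}^1;K^M_m)$, flatness of $\pi$ supplying the fibre-length multiplicities $d_z$ and $\pi_*$ supplying the transfers $\Tr_{k(z)/k}$.

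Finally I would invoke homotopy invariance: on $A_1(\mathbb{A}^1;K^M_m)$ the two specialization maps $i_0^*$ and $i_1^*$ coincide, since flat pullback $A_0(\pt;K^M_m)\xrightarrow{\ \sim\ }A_1(\mathbb{A}^1;K^M_m)$ is an isomorphism; hence $\mathring\rho_m(\partial_0 h)=i_0^*\pi_*\zeta=i_1^*\pi_*\zeta=\mathring\rho_m(\partial_1 h)$. To keep the argument self-contained one can replace the cycle-module black box by explicit reciprocity: normalising the closures of the components of $\tilde Z$ in $\mathbb{P}^n\times\mathbb{P}^1$ gives smooth projective curves $\bar C_j$ finite over $\mathbb{P}^1$, and applying Weil reciprocity to $\{\bar\pi^*u,g_1,\dots,g_m\}$ with $u=t/(t-1)$ gives $\sum_{c\mapsto 0}e_c\Tr\{g(c)\}=\sum_{c\mapsto 1}e_c\Tr\{g(c)\}$ — here $u(\infty)=1$ kills every residue over $\infty$ because $\partial(\{1\}\cdot-)=0$, while residues over $t\in\mathbb{A}^1\setminus\{0,1\}$ vanish as all entries are units. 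I expect the main obstacle to be not the reciprocity but the multiplicity bookkeeping that matches the curve-theoretic data (ramification indices $e_c$, component multiplicities $\mu_\eta$, and residue-degree factors via the projection formula $\Tr\circ\operatorname{res}=[k(c):k(z)]$) with the scheme-theoretic fibre-length weights $d_z$ of the definition of $\mathring\rho_m$; flatness of $\pi$ is precisely what guarantees these agree.
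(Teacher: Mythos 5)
Your proposal is correct in substance, and it takes a genuinely different route from the paper's proof, so it is worth comparing the two. Both arguments reduce (identically) to showing $\mathring{\rho}_m(\partial_0 h)=\mathring{\rho}_m(\partial_1 h)$ for a single homotopy $h=(Z,W,\phi,g)$ over $\mathbb{A}^1$, and both ultimately rest on a reciprocity statement for $K^M_*$ on a curve; the executions diverge after that. The paper works with the projective closure $Z_{Sch}$ of the (possibly non-reduced) support, blows up to obtain $\widetilde{Z_{Sch}}$ whose reduction is the smooth normalization, proves Weil reciprocity there via Bass--Tate (Lemma \ref{SmoothCurve}), extends it to curves with nilpotents by the commutative-algebra Lemma \ref{loc.free} (the associated graded of the nilradical filtration is locally free over the smooth reduction, so every multiplicity scales by one fixed rank $r$), and finally compares $Z_{Sch}$ with $\widetilde{Z_{Sch}}$ by pulling back to $\Spec k[[t]]$ and matching degrees of finite flat schemes. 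You instead stay on the affine zero-locus scheme $\tilde Z=\{s=0\}$; your observation that $\tilde Z$ is a local complete intersection (Krull plus finiteness of the support), hence Cohen--Macaulay, hence finite \emph{flat} over $\mathbb{A}^1$ by miracle flatness, is a structural point the paper never exploits (it only uses the absence of embedded points), and it lets you dispense with compactification, normalization and blowups altogether: homotopy invariance of $K^M$ (Milnor's exact sequence for $k(t)$, i.e.\ Rost's $A_0(\pt;K^M_m)\cong A_1(\mathbb{A}^1;K^M_m)$) applied to the pushforward $\pi_*\zeta$ replaces Weil reciprocity on a projective curve. What your route buys is robustness and brevity --- non-reducedness is absorbed by general multiplicity lemmas rather than by lemmas requiring a smooth reduction --- at the price of heavier standard machinery (Rost's compatibility of proper pushforward with specialization, equivalently Kato's norm-residue compatibility plus the projection formula). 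What the paper's route buys is self-containedness: its only external input is Bass--Tate.

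One caveat: the ``multiplicity bookkeeping'' you defer at the end is precisely where the paper spends most of its effort (Lemmas \ref{SmoothNilpotents}, \ref{loc.free} and the final lemma on finite schemes of equal degree), so you should make explicit that in your framework it closes by citation rather than by hope. The identity you need at a closed point $z$ of the fibre over $t_0$ is
\[
\operatorname{length}\big(\mathcal{O}_{\tilde Z,z}/(t-t_0)\big)\;=\;\sum_{\eta}\operatorname{length}(\mathcal{O}_{\tilde Z,\eta})\sum_{\tilde z\mapsto z}\operatorname{ord}_{\tilde z}(t-t_0)\,[k(\tilde z):k(z)],
\]
where $\eta$ runs over the generic points of $\tilde Z$ and $\tilde z$ over the points of the normalization of $\overline{\{\eta\}}$ above $z$. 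This follows from two standard facts: d\'evissage for lengths over a one-dimensional local ring (for a non-zerodivisor $a$, $\operatorname{length}(A/aA)=\sum_{\mathfrak p\,\mathrm{min}}\operatorname{length}(A_{\mathfrak p})\cdot\operatorname{ord}_{A/\mathfrak p}(a)$; Fulton, Lemma A.2.7), and invariance of the order function on an integral curve under normalization (the conductor argument; Fulton, Example A.3.1). The non-zerodivisor hypothesis is exactly what your flatness (equivalently, Cohen--Macaulayness plus the fact that no component of $\tilde Z$ lies in a fibre) supplies, so your attribution of the bookkeeping to flatness is accurate; since the $g_i$ are units along $\tilde Z$, one also has $\operatorname{cores}_{k(\tilde z)/k(z)}\{g(\tilde z)\}=[k(\tilde z):k(z)]\cdot\{g(z)\}$, which matches your cycle-theoretic specialization with the weights $d_z$ in the definition of $\mathring{\rho}_m$. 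With these two citations spelled out, your argument is complete.
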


To prove this itatement, we will have to study the behaviour of elements of $K$-groups on the curve $Z,$ which is part of a homotopy between to correspondences$pt \to \mathbb{G}_m^M.$ So let's stude the behaviour of $K$-theory on curves with a morphism to $\mathbb{P}^1,$ and prove, in various circumstances, statements that the sum, analogous to the one in Formula \ref{rho.def}, taken for the fiber over $0,$ will be equal to the same sum for the fiber over $1.$ Begin with a smooth curve.

 \begin{lemma} \label{SmoothCurve}
    Let $C$ be a smooth projective curve; $g_1, \cdots ,g_m,f$ be rational functions on $C,$ such that $f=1$ in the zeros and poles of $g_i.$ Let $f$ have zeros 
		
		\[ \text{in points } p^0_1, \cdots, p^0_{r^0}, \text{ with multiplicities }  mul_1^0, \cdots, mul^0_{r^0},\]
		
		and $\frac{1}{f}$ ---
		
		\[ \text{in points } p^{\infty}_1, \cdots, p^{\infty}_{r^{\infty}} \text{,	with multiplicities } mul_1^{\infty}, \cdots, mul^{\infty}_{r^{\infty}}.\]
		
		Then
		
		\[\sum \limits_{i=1}^{r^0} mul^0_i \Tr\big|^{k(p^0_i)}_k \{g_1|_{p^0_i}, \cdots,g_m|_{p^0_i}\} = \sum \limits_{i=1}^{r^{\infty}} mul^{\infty}_i \Tr\big|^{k(p^{\infty}_i)}_k \{g_1|_{p^{\infty}_i}, \cdots,g_m|_{p^{\infty}_i} \} . \]
		
    \end{lemma}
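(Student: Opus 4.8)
The plan is to deduce this reciprocity-type identity from the Weil reciprocity law for Milnor $K$-theory on the smooth projective curve $C$. The single object that encodes both sides of the desired equality is the symbol
\[
\{g_1,\dots,g_m,f\}\in K^M_{m+1}(k(C)),
\]
obtained by appending $f$ as an extra entry to the tuple $(g_1,\dots,g_m)$. To it I will apply the reciprocity theorem (Weil reciprocity in the form extended to Milnor $K$-theory by Bass--Tate and Kato): for a projective smooth curve $C$ over $k$ with function field $k(C)$, the sum over closed points of the residue maps, followed by the norms down to $k$, vanishes, that is
\[
\sum_{p\in C}\Tr\big|^{k(p)}_k\,\partial_p\{g_1,\dots,g_m,f\}=0,
\]
where $p$ runs over the closed points of $C$ and $\partial_p\colon K^M_{m+1}(k(C))\to K^M_m(k(p))$ is the residue (tame symbol) attached to the valuation $\mathrm{ord}_p$. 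Properness of $C$ is what makes this global sum vanish.

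The heart of the argument is the local computation of $\partial_p\{g_1,\dots,g_m,f\}$, for which I will use the characterization of the residue ($\partial_p$ kills symbols of units and sends $\{\pi,u_2,\dots,u_{m+1}\}$ to $\{\bar u_2,\dots,\bar u_{m+1}\}$ for a uniformiser $\pi$), together with the fact that any Milnor symbol containing the entry $1$ is zero. There are three kinds of points. At a point where all of $g_1,\dots,g_m,f$ are units the residue vanishes at once. At a point $p$ where some $g_i$ has a zero or pole, the hypothesis forces $f(p)=1$, so $\mathrm{ord}_p(f)=0$ and $\bar f=1$; when the residue is expanded by multilinearity, the slot occupied by $f$ can never supply the uniformiser (its order is $0$), so every surviving term still carries the reduced entry $\bar f=1$ and therefore vanishes. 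Finally, at a zero or pole $p$ of $f$ the hypothesis guarantees that all the $g_i$ are units there, and writing $f=\pi^{e}u$ with $u$ a unit and $e=\mathrm{ord}_p(f)$ gives, up to a global sign depending only on $m$,
\[
\partial_p\{g_1,\dots,g_m,f\}=e\,\{\,g_1|_p,\dots,g_m|_p\,\}.
\]

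Substituting these values into the reciprocity relation leaves only the contributions of the zeros and poles of $f$: the zeros $p^0_i$ enter with coefficient $\mathrm{ord}_{p^0_i}(f)=\mul^0_i>0$ and the poles $p^\infty_i$ with coefficient $\mathrm{ord}_{p^\infty_i}(f)=-\mul^\infty_i<0$. Since the overall sign $(-1)^m$ is the same at every zero and every pole, it factors out of the (vanishing) relation and is harmless; transposing the negative terms to the other side yields exactly the claimed identity. The step I expect to need the most care is the residue computation at the zeros and poles of the $g_i$: one must verify that the correction terms produced by the relation $\{\pi,\pi\}=\{\pi,-1\}$, which appear when several of the $g_i$ have nonzero order simultaneously, still retain the factor $\bar f=1$ and hence contribute nothing. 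The only other point requiring attention is invoking the reciprocity law in the correctly normalized form, namely the vanishing of the sum of normed residues, which is where projectivity of $C$ is used.
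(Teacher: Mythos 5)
Your proposal is correct and follows essentially the same route as the paper: both apply the Bass--Tate form of Weil reciprocity to the augmented symbol $\{g_1,\dots,g_m,f\}\in K^M_{m+1}(k(C))$ and then compute residues locally, with the hypothesis $f=1$ at zeros and poles of the $g_i$ forcing those contributions to vanish because the resulting symbols contain the entry $\overline{f}=1$. Your extra attention to the $(-1)^m$ sign and to the $\{\pi,\pi\}=\{\pi,-1\}$ correction terms only makes explicit details the paper leaves implicit.
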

    \begin{proof}
    Consider the symbol
		
		\[\{g_1,\cdots,g_m,f\} \in K_{m+1}(k(Z)).\]
		
		By the Weil reciprocity law \cite{Bass-Tate}[Theorem 5.6], 
		
		\[\sum \limits_{\nu \in \widetilde{Z}} \Tr\big|^{k(\nu)}_k \partial_{\nu} \{g_1,\cdots,g_m,f\} = 0. \]
		
		The sum goes across all closed points (=discrete valuations). $\partial_{\nu}$ is the norm residue map defined in \cite{Bass-Tate}[p. 22, before Proposition 4.4].
    
    Let's calculate the residue $\partial_{\nu} \{g_1,\cdots,g_m,f\}$ in each point $\nu.$ There are two (compatible) possibilities:  Either $f(\nu)=1$, Or $g_1, \cdots g_m \in \mathcal{O}_{\nu}^{\ast}.$
    
    In the first case, our symbol can be written as an algebraic sum of symbols $\{f,h_1, \cdots, h_m\},$ где $h_1,\cdots,h_{m-1} \in \mathcal{O}_{\nu}^{\ast}.$ From \cite{Bass-Tate}[Proposition 4.5 (c)], 
		
		\[\partial_{\nu}\left(\{f,h_1, \cdots, h_m\}\right) =  \nu(h_m)\cdot \{\overline{f},\overline{h_1}, \cdots,\overline{h_{m-1}} \},\]
		
		where the overhead line denotes the common residue, or the value at a point.
    
    In the other case, the same Proposition is applied, without the need for any preparation. In points where $f$ is invertible, the result is $0.$ Only the points$\{ f = 0, \infty\}$ remain. Denoting for each point $p_i^j$ the corresponding valuation as $\nu_i^j,$ we see that
		
		\[\nu_i^0(f) = m_i^0, \nu_i^{\infty}(f) = -m_i^{\infty},\]
		
		and thus, the Weil reciprocity law, together with the explicit formula for residue maps, give the statement of the Lemma.
    \end{proof}
 \begin{lemma} \label{SmoothNilpotents}
 Let $C$  be a one-dimensional projective scheme over $k$ with no embedded points, such that $C^{red}$ is a smooth curve;  let $g_1, \cdots, g_m ,f$ be rational functions on $C,$ such that $f=1$ in all zeros and poles of $g_i.$ Let $f$ have zeros

\[ \text{in points } p^0_1, \cdots, p^0_{r^0}, \text{ with multiplicities }  mul_1^0, \cdots, mul^0_{r^0},\]

 and $\frac{1}{f}$ ---

\[ \text{in points } p^{\infty}_1, \cdots, p^{\infty}_{r^{\infty}} \text{ with multiplicities } m_1^{\infty}, \cdots, m^{\infty}_{r^{\infty}}.\]

 (Here, generalizing the smooth case, multiplicities are taken to be lengths of local Artin rings, which are rings of functions of Artin schemes cut out by the function $f$ or $\frac{1}{f}.$) Then  

\[\sum \limits_{i=1}^{r^0} m^0_i \Tr\big|^{k(p^0_i)}_k \{g_1|_{p^0_i},\cdots,g_m|_{p^0_i}\} = \sum \limits_{i=1}^{r^{\infty}} m^{\infty}_i \Tr\big|^{k(p^{\infty}_i)}_k \{g_1|_{p^{\infty}_i},\cdots,g_m|_{p^{\infty}_i}\} .\]

 \end{lemma}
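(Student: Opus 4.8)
The plan is to reduce the non-reduced statement to the smooth case already established in Lemma~\ref{SmoothCurve}, handled one irreducible component at a time, with the nilpotent thickening absorbed into a single numerical multiplicity. Since $C$ is projective and $C^{\red}$ is a smooth curve, $C^{\red}$ is a finite disjoint union of smooth projective integral curves $\bar C_1,\dots,\bar C_s$: connectedness together with regularity forces each component to be integral, and being closed in the projective $C$ keeps it projective. Crucially, for a closed point $p$ the residue field $k(p)$ and the values $g_i|_p\in k(p)^\ast$ depend only on the reduced point, so the symbols $\{g_1|_p,\dots,g_m|_p\}$ appearing on both sides are literally the same whether computed on $C$ or on the component $\bar C_j$ through $p$. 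Thus the only feature distinguishing this lemma from Lemma~\ref{SmoothCurve} is the multiplicities $m_i^0,m_i^\infty$, which here are lengths of Artin rings inside $C$ rather than orders of vanishing on the reduced curve.

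The technical heart is therefore a purely local length computation. Fix a component $\bar C_j$ and set $\ell_j:=\operatorname{length}(\mathcal O_{C,\eta_j})$, its generic multiplicity, i.e. the length of the Artinian local ring at the generic point $\eta_j$. For a closed point $p\in\bar C_j$ write $A=\mathcal O_{C,p}$, with nilradical $\mathcal N$, so that $A/\mathcal N=\mathcal O_{\bar C_j,p}$ is a discrete valuation ring with valuation $v_p$. The no-embedded-points hypothesis, together with the disjointness of the components of $C^{\red}$, guarantees that $A$ has a unique associated prime, namely $\mathcal N$; since $f$ has an isolated zero at $p$ its image in $\mathcal O_{\bar C_j,p}$ is nonzero, so $f$ is not nilpotent and hence is a nonzerodivisor in $A$. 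I would then invoke the additivity of the order function for a one-dimensional Noetherian local ring (Fulton, \emph{Intersection Theory}, Appendix A): for a nonzerodivisor $f$,
$$
\operatorname{length}(A/fA)=\ell_j\cdot v_p(f),
$$
and the same identity applied to $1/f$ near a pole gives $m_i^\infty=\ell_j\cdot v_{p_i^\infty}(1/f)$. In words, the Artin-length multiplicity on the thickened curve factors as the generic multiplicity of the component times the ordinary order of vanishing on the smooth reduced curve.

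With this in hand the result is immediate. For each component $\bar C_j$ I would apply Lemma~\ref{SmoothCurve} to the restrictions $g_1|_{\bar C_j},\dots,g_m|_{\bar C_j},f|_{\bar C_j}$ (components on which $f$ is a nonzero constant contribute nothing to either side and are harmless), obtaining the equality of the two transfer sums weighted by the \emph{reduced} orders $v_p(f)$. Multiplying that equality by $\ell_j$, substituting $m_i^0=\ell_j v_{p_i^0}(f)$ and $m_i^\infty=\ell_j v_{p_i^\infty}(1/f)$, and summing over $j=1,\dots,s$ then yields exactly the claimed identity, since every zero and pole of $f$ on $C$ lies on a unique component. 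The main obstacle is establishing the local multiplicity formula: one must use the no-embedded-points hypothesis to ensure that $f$ is a nonzerodivisor and that the nilradical is the only associated prime, and then argue (via a composition series of $A$ after localizing at $\eta_j$, or by citing the additivity of $\operatorname{ord}$) that the length picks up precisely the factor $\ell_j$. Everything else is bookkeeping over the components.
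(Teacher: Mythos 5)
Your argument is correct, and its skeleton coincides with the paper's: decompose into connected components, note that residue fields and the values $g_i|_p$ see only $C^{red}$, prove that along each component the Artin-length multiplicity on $C$ is a fixed constant times the order of vanishing on $C^{red}$, then apply Lemma~\ref{SmoothCurve} componentwise, scale, and sum. The one genuine difference is how that multiplicity comparison is established. The paper proves it from scratch (Lemma~\ref{loc.free}): it filters $\mathcal{O}_C$ by the annihilators $\mathcal{M}_n=(0:\mathcal{I}^n)$ of powers of the nilradical, uses the absence of embedded points to show that the associated graded module is torsion-free over $\mathcal{O}_{C^{red}}$, hence locally free because $C^{red}$ is smooth, and takes the constant $r$ to be its total rank; local freeness is what makes $r$ constant along the component. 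You instead quote the standard length formula for one-dimensional Noetherian local rings (Fulton, Intersection Theory, Appendix A): for $A$ with unique minimal prime $\mathcal{N}$ and $f$ a nonzerodivisor, $\operatorname{length}(A/fA)=\operatorname{length}(A_{\mathcal{N}})\cdot\operatorname{ord}_{A/\mathcal{N}}(f)$, which identifies the constant as the generic length $\ell_j=\operatorname{length}(\mathcal{O}_{C,\eta_j})$. The two constants necessarily agree, but your identification is intrinsic and makes constancy along the component automatic, since $\ell_j$ is attached to the generic point; you also locate correctly where the hypotheses enter (no embedded points makes $f$ a nonzerodivisor, so the kernel correction term in Fulton's formula vanishes, while smoothness of $C^{red}$ is needed only for Lemma~\ref{SmoothCurve} itself, via Weil reciprocity). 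What the paper's longer route buys is self-containedness, never leaving elementary commutative algebra; yours is shorter and delegates the d\'evissage to a standard reference. One terminological quibble: the fact you need is not the additivity $\operatorname{ord}(fg)=\operatorname{ord}(f)+\operatorname{ord}(g)$ but the d\'evissage formula expressing $\operatorname{length}(A/fA)$ through the minimal primes (Fulton's Lemma~A.2.7); your parenthetical suggestion of a composition-series argument after localizing at $\eta_j$ is indeed the right proof of that formula.
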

\begin{proof}
Divide $C$ into connected components $C_k.$ The required equality for $C$ is the sum of equalities for $C_k.$ By the previous Lemma \ref{SmoothCurve}, the equality is true for each $C_k^{red}.$ It suffices to show that the equality for $C_k$ can be acquired from the equality for $C_k^{red}$ by multiplying it by some number $r_k.$ That is shown to be true in the following commutative algebra statement:
\end{proof}
\begin{lemma} \label{loc.free}
Let $C$ be a connected one-dimensional projective scheme over $k$ with no embedded points, such that $C^{red}$ is a smooth curve. There exists a number $r,$ such that for any closed point $p \in C$ and a non-nilpotent function $f \in \mathcal{O}_{C,p},$ the multiplicity of the zero of $f$ in $p$ on $C$ is $r$ times larger than the multiplicity of the zero of $f$ in $p$ on $C^{red}.$
\end{lemma}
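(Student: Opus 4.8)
The plan is to reduce the statement to a purely local computation in the local ring $A=\mathcal{O}_{C,p}$ and to identify $r$ with the length of the local ring of $C$ at its generic point. First I would record the structural facts. Since $C$ is connected and $C^{\red}$ is a smooth, hence regular, hence locally irreducible curve, $C^{\red}$ and therefore $C$ is irreducible, so $C$ has a unique generic point $\eta$; and since $C$ has no embedded points, $A$ is a one-dimensional Cohen--Macaulay local ring whose unique associated prime is its nilradical $\mathfrak{n}$. Writing $B=A/\mathfrak{n}=\mathcal{O}_{C^{\red},p}$, smoothness makes $B$ a discrete valuation ring with valuation $v_p$, so the multiplicity of the zero of $f$ at $p$ on $C^{\red}$ is $\ell_B(B/\bar f B)=v_p(\bar f)$, while the multiplicity on $C$ is $\ell_A(A/fA)$. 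I would then set $r:=\ell_{\mathcal{O}_{C,\eta}}(\mathcal{O}_{C,\eta})$, the length of the Artinian local ring at the generic point; this is manifestly independent of $p$ and of $f$, and the whole lemma amounts to the single local identity
$$\ell_A(A/fA)=r\cdot v_p(\bar f).$$
Before proving it I would note where the hypothesis on embedded points enters: an element of $A$ is a zero-divisor exactly when it lies in some associated prime, i.e. in $\mathfrak{n}$, so $f$ being non-nilpotent is equivalent to $f$ being a non-zero-divisor, whence $\bar f\neq 0$ in $B$ and both $A/fA$ and $B/\bar f B$ have finite length (the dimension drops by Krull's principal ideal theorem).

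To establish the local identity I would filter $A$ by powers of its nilradical, $A=\mathfrak{n}^0\supseteq\mathfrak{n}^1\supseteq\cdots\supseteq\mathfrak{n}^{N}=0$, whose graded pieces $G_i=\mathfrak{n}^i/\mathfrak{n}^{i+1}$ are finitely generated $B$-modules on which $f$ acts through $\bar f$. For a non-zero-divisor acting on a finite module with finite-length kernel and cokernel, the Herbrand-type Euler characteristic $\chi(f,M)=\ell(M/fM)-\ell(M[f])$ is additive in short exact sequences, by the snake lemma applied to multiplication by $f$. Telescoping along the filtration and using $A[f]=0$ gives $\ell_A(A/fA)=\chi(f,A)=\sum_i\chi(\bar f,G_i)$. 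By the structure theorem over the discrete valuation ring $B$ each $G_i$ splits as a free part of some rank $a_i$ and a finite-length torsion part; since $\bar f\neq 0$, multiplication by $\bar f$ on any finite-length $B$-module has kernel and cokernel of equal length, so its contribution to $\chi$ vanishes, while the free part contributes $a_i\,v_p(\bar f)$. Hence $\ell_A(A/fA)=\bigl(\sum_i a_i\bigr)\,v_p(\bar f)$.

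It remains to identify $\sum_i a_i$ with the global constant $r$. Localizing the nilradical filtration at $\mathfrak{n}$, i.e. at the generic point, turns it into the maximal-ideal filtration of the Artinian local ring $\mathcal{O}_{C,\eta}=A_{\mathfrak{n}}$ with residue field $k(\eta)=\operatorname{Frac}(B)$, and the length of $\mathcal{O}_{C,\eta}$ equals $\sum_i\dim_{k(\eta)}\bigl(G_i\otimes_B k(\eta)\bigr)=\sum_i\operatorname{rank}_B G_i=\sum_i a_i$. Thus $\sum_i a_i=r$ independently of $p$, which closes the argument. I expect the main obstacle to be exactly this middle step: carefully justifying the additivity of $\chi(f,-)$ along the filtration and the vanishing of the torsion contributions, i.e. showing that the nilpotent thickening of $C$ contributes the same multiplicative factor $r$ regardless of the point $p$ and the function $f$. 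For a more conceptual route to the same identity $\ell_A(A/fA)=\ell_{A_{\mathfrak{n}}}(A_{\mathfrak{n}})\cdot v_p(\bar f)$ one may instead invoke the associativity formula for Hilbert--Samuel multiplicities together with the equality of colength and multiplicity for a parameter in a Cohen--Macaulay ring, which yields $e((f);A)=\ell_{A_{\mathfrak{n}}}(A_{\mathfrak{n}})\cdot e((\bar f);B)$.
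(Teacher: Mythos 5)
Your argument is correct, but it takes a genuinely different route from the paper's. Both proofs filter the local ring $A=\mathcal{O}_{C,p}$ and reduce the count to graded pieces over the discrete valuation ring $B=\mathcal{O}_{C^{\red},p}$, but the filtrations --- and the mechanism that makes the count work --- differ. The paper filters by the annihilators $\mathcal{M}_n=(0:\mathcal{I}^n)$ of powers of the nilradical and uses the no-embedded-points hypothesis to prove that the graded pieces $\mathcal{M}_n/\mathcal{M}_{n-1}$ are torsion-free, hence locally free over the smooth curve $C^{\red}$; with that choice $f$ is a non-zero-divisor on every graded piece, so $(f)\cap\mathcal{M}_{n,p}=f\cdot\mathcal{M}_{n,p}$ and the length of $A/fA$ is computed directly as $\sum_n d\cdot r_n=d\cdot r$, with $r$ the total rank of the associated graded sheaf (well defined and constant because $C$ is connected). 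You filter instead by the powers $\mathfrak{n}^i$ of the nilradical themselves, whose graded pieces may well have $B$-torsion, and you neutralize that torsion by replacing naive length counting with the Herbrand quotient $\chi(f,M)=\ell(M/fM)-\ell(M[f])$, additive by the snake lemma and vanishing on finite-length $B$-modules; for you the hypothesis on embedded points enters only through ``non-nilpotent $\Rightarrow$ non-zero-divisor,'' which gives $A[f]=0$, and the same observation ($f$ outside the unique minimal prime, so $V(f)\subseteq\{\mathfrak{m}\}$ in the one-dimensional local ring $A$) is the cleanest justification that all the lengths you manipulate are finite --- cleaner than the parenthetical appeal to Krull's principal ideal theorem. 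Your constant $r=\ell\big(\mathcal{O}_{C,\eta}\big)$ at the generic point and the paper's total rank are of course the same number, since rank equals generic fibre dimension. The trade-off: the paper's filtration is engineered so that no correction terms ever appear, at the price of the short but nontrivial torsion-freeness argument for $\mathcal{M}_n/\mathcal{M}_{n-1}$; your version needs no special properties of the filtration, uses only standard tools (structure of finite modules over a DVR, Euler-characteristic additivity), and, as you note, is essentially a special case of the associativity formula for Hilbert--Samuel multiplicities, so it generalizes more readily.
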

\begin{proof}
Let  $\mathcal{I}$ be the nilradical of $\mathcal{O}_C$. Define

\[\mathcal{M}_n = (0 : \mathcal{I}^n) \subset \mathcal{O}_C.\]

$\mathcal{O}_C$ is torsion free as an $\mathcal{O}_C-$module, hence so is its submodule $\mathcal{M}_n$. Let's prove that $\mathcal{M}_n / \mathcal{M}_{n-1} $ is also torsion-free.

Indeed, if $a \cdot m \in \mathcal{M}_{n-1}, m \not \in \mathcal{M}_{n-1}, a \not \in \mathcal{I},$ there exists $s \in \mathcal{I}^{n-1},$ such that $s \cdot m \neq 0.$  But at the same time, $a \cdot s \cdot m = s \cdot a \cdot m = 0,$ making $s \cdot m$ a torsion element, which is a contradiction. 

Thus, the graded $\mathcal{O}_C-$module associated with the filtration $\mathcal{M}_n$ is torsion-free. Since $C$ has no embedded points, it also means that it is torsion-free as an $\mathcal{O}_C / \mathcal{I}-$module. Since the curve $C^{red}$ is smooth, this means that the module is locally free. Let $r$ be its rank, and $r_n$ the rank of its $n$th graded component.

 Let $p \in C$ be a closed point, $f \in \mathcal{O}_{C,p}.$ Localising at $p$ gives a filtration $\mathcal{M}_{n,p}.$ On its intermediate quotients $f$ is a nonzerodivisor, hence $(f) \cap \mathcal{M}_{n,p} = f \cdot \mathcal{M}_{n,p}.$ Thus the associated filtration of the module $\mathcal{O}_{C,p}/(f)$ is equal to 

\[\mathcal{M}_{n,p}/ \big ((f) \cap \mathcal{M}_{n,p} \big ) = \mathcal{M}_{n,p}/ (f \cdot \mathcal{M}_{n,p}).\]

 Its intermediate quotients are

\[ \frac{ \mathcal{M}_{n,p} / \mathcal{M}_{n-1,p}} {f \cdot ( \mathcal{M}_{n,p} / \mathcal{M}_{n-1,p}) } .\]

The module

\[\frac{ \mathcal{M}_{n,p} / \mathcal{M}_{n-1,p}} {f \cdot ( \mathcal{M}_{n,p} / \mathcal{M}_{n-1,p}) }\] 

is isomorphic to $\left( \mathcal{O}_{C,p}/(\mathcal{I}+(f)) \right)^{\oplus r_n},$ hence its length is equal to $d \cdot r_n,$ where $d$ is the length of $\mathcal{O}_{C,p}/(\mathcal{I}+(f)),$ which is the multiplicity of the zero of $f$ in $p$ on the curve $C^{red}.$  Since these spaces are the intermediate quotients of a filtration of the Artin ring $\mathcal{O}_{C,p}/(f)$, its length is equal to the sum of their dimensions  $\sum d \cdot r_i = d \cdot r.$ Which means that the multiplicity of the zero of $f$ in $p$ on $C$ is equal to $d \cdot r.$
\end{proof}

Let's proceed to the proof of the Proposition
\begin{proof}

It is sufficient to show that for any homotopy

\[h:(Z,W,\phi,g) : \mathbb{A}^1 \to \mathbb{G}_m, \mathring{\rho}_m(h \circ j_0) = \mathring{\rho}_m(h \circ j_1),\]

 where $j_0, j_1 $ are the embeddings of  $0$  $1$ into $\mathbb{A}^1_k.$ Denote the structural map $Z \to \mathbb{A}^1$ by $\pi.$ 

Note that, since $Z$ is regularly embedded into a smooth variety, it has no embedded points.

 Preserving the same notation, substitute $Z_{Sch}$ for its projective closure. On $Z_{Sch}$ there are rational functions $g_i$ and $f=\frac{\pi^{\ast} t}{\pi^{\ast} t - 1},$ with the property that $g_1,\cdots,g_m$ are invertible regular functions on the complement of $\{f=1\}.$

 The normalisation of $Z$ is a smooth projective curve $\widetilde{Z}$. The normalisation of a curve can be acquired by sequential blowup of points. Let $\widetilde{Z_{Sch}} \supset \widetilde{Z}$ be the result of blowing up $Z_{Sch}$ in the same sequence of points. 

Each blowup results in the insertion of an effective Cartier divisor (see \cite[Definition 22.2.0.1]{FOAG}), hence on $\widetilde{Z_{Sch}}$ there are no embedded points. By \cite[Lemma 22.2.6]{FOAG}, the proper transform of $Z$ in $\widetilde{Z_{Sch}}$ is equal to $\widetilde{Z}.$ Since each blowup results in an insertion of an effective Cartier divisor, $\widetilde{Z_{Sch}}$ has no new irreducible components in comparison with $Z_{Sch}.$ Since $\widetilde{Z_{Sch}}^{red}$ has the same number of components, the reduced part $\widetilde{Z_{Sch}}^{red} = \widetilde{Z}$ is a smooth curve. Thus, $\widetilde{Z_{Sch}}$ satisfies the conditions of the previous Lemma \ref{SmoothNilpotents}.

Denote the zeros and poles of $f$ on $Z_{Sch}$ by

\[p_1^0, \cdots p_{r^0}^0 ; p_1^{\infty} \cdots p_{r^{\infty}}^{\infty}, \text{ with multiplicities } mul_1^0, \cdots mul_{r^0}^0; mul_1^{\infty}, \cdots mul_{r^{\infty}}^{\infty}.\]

On $\widetilde{Z_{Sch}}$ --- by

\[\tilde{p}_1^0, \cdots \tilde{p}_{\tilde{r}^0}^0 ; \tilde{p}_1^{\infty} \cdots \tilde{p}_{\tilde{r}^{\infty}}^{\infty}, \text{ with multiplicities } \widetilde{mul}_1^0, \cdots \widetilde{mul}_{\tilde{r}^0}^0; \widetilde{mul}_1^{\infty}, \cdots \widetilde{mul}_{\tilde{r}^{\infty}}^{\infty}.\]

We have three equaltites in the following chain, completing which will prove the Proposition:
\[
\begin{tikzcd}
\mathring{\rho}_m(h \circ j_0) \arrow[d,equal] & \mathring{\rho}_m(h \circ j_1) \arrow[d,equal]  \\
 \sum \limits_{i=1}^{r^0} mul^0_i \Tr\big|^{k(p^0_i)}_k \{g_1|_{p^0_i}, \cdots,g_m|_{p^0_i}\} \arrow[d,equal,dotted] 
& \sum \limits_{i=1}^{r^{\infty}} mul^{\infty}_i \Tr\big|^{k(p^{\infty}_i)}_k \{g_1|_{p^{\infty}_i}, \cdots,g_m|_{p^{\infty}_i}\} \arrow[d,equal,dotted]\\
 \sum \limits_{i=1}^{\tilde{r}^0} \widetilde{mul}^0_i \Tr\big|^{k(\tilde{p}^0_i)}_k \{g_1|_{\tilde{p}^0_i}, \cdots,g_m|_{\tilde{p}^0_i}\} \arrow[r,equal]
& \sum \limits_{i=1}^{\tilde{r}^{\infty}} \widetilde{mul}^{\infty}_i \Tr\big|^{k(\tilde{p}^{\infty}_i)}_k \{g_1|_{\tilde{p}^{\infty}_i}, \cdots,g_m|_{\tilde{p}^{\infty}_i}\}
\end{tikzcd}
\]

The two dotted equalities are analogous to each other, so we'll only prove the leftmost one.
It states that the sum for $Z$ is equal to the analogous sum for $\widetilde{Z_{Sch}}.$ This is not obvious, since, for example, a point with nilpotents on $Z_{Sch}$ can turn into several points on $\widetilde{Z_{Sch}},$ with other residue fields and nilpotents. Moreover, by construction of $\widetilde{Z_{Sch}}$, the nilpotents over those points "collapse" under the map to $Z_{Sch}.$ 

To prove this, pull back to the scheme $ V = Spec k[[t]]$. Since outside of the blow-up points the scheme does not change, $\widetilde{-}_V : \widetilde{Z_{Sch}}_V \to Z_{Sch,V}$ is an isomorphism over the complement $V-v$ of the closed point. 

If $Y$ is a connected component of $Z_{Sch,V}$ (a local scheme), and $\widetilde{Y}_k$ are all the connected components of its preimage in $\widetilde{Z_{Sch}},$ then $Y$ and $\coprod \widetilde{Y}_k$ are both finite and flat (since their structure rings are torsion-free $k[[t]]-$modules) over $V.$ They alse have the same degree over $V$, since they are isomorphic over $V-v$. The required equality is given by adding all the equalities from the following Lemma across all connected components $Y$ (This divides the sum for $\widetilde{Z_{Sch}}$ into subsums) :
\begin{lemma}
Let $S, \widetilde{S}$ be two finite schemes of the same degree over the field $k$, such that $S$ is local, and there is a morphism $\widetilde{-}: \widetilde{S} \to S$. 
Denote the points of $S$ and $\widetilde{S}$ as $s$ and $\{\tilde{s_i}\}.$ Let $S_i$ be the component of $\widetilde{S}$ containing $s_i.$ Denote the induced morphism $\pi_i: s_i \to s.$ Let $a \in K_m^M(k(s))$.  Denote the length of $S$ as $d$, and the length of $S_i$ as $d_i.$ Then 

\[d \cdot \Tr\big|^{k(s)}_k (a) =\sum \limits_k  d_i \Tr\big|^{k(s_i)}_k ((\pi_i)^* a).\]
\end{lemma}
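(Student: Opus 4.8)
The plan is to reduce the statement entirely to two standard properties of the transfer (norm) map in Milnor $K$-theory, together with a dimension count, so that no further geometry is needed beyond identifying the relevant field degrees. First I would fix notation. Since $S$ is a finite local $k$-scheme, write $S = \Spec A$ with $A$ an Artinian local $k$-algebra of residue field $k(s)$ and length $d$, so that $\dim_k A = d\,[k(s):k]$. Likewise $\widetilde{S}$ is a finite $k$-scheme, hence the disjoint union of its connected components $S_i = \Spec B_i$, each $B_i$ Artinian local with residue field $k(s_i)$ and length $d_i$, so $\dim_k B_i = d_i\,[k(s_i):k]$. Every closed point $s_i$ maps to the unique closed point $s$ of $S$, and the induced homomorphism on residue fields realises $k(s_i)$ as a finite extension of $k(s)$ whose associated restriction map is exactly $(\pi_i)^*$. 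Under these identifications the hypothesis that $S$ and $\widetilde{S}$ have the same degree over $k$ becomes the numerical identity $d\,[k(s):k] = \sum_i d_i\,[k(s_i):k]$.

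Next I would rewrite the right-hand side using the tower $k \subseteq k(s) \subseteq k(s_i)$. By functoriality (transitivity) of the Milnor $K$-theory norm one has $\Tr\big|^{k(s_i)}_k = \Tr\big|^{k(s)}_k \circ \Tr\big|^{k(s_i)}_{k(s)}$, while the projection formula applied to the class $1 \in K_0^M(k(s_i))$ gives $\Tr\big|^{k(s_i)}_{k(s)}\big((\pi_i)^* a\big) = [k(s_i):k(s)]\cdot a$, since the composite of restriction followed by the norm is multiplication by the degree. Substituting these, the right-hand side collapses to $\big(\sum_i d_i\,[k(s_i):k(s)]\big)\,\Tr\big|^{k(s)}_k(a)$.

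Finally I would close the gap with the degree count. Inserting $[k(s_i):k] = [k(s_i):k(s)]\,[k(s):k]$ into the same-degree identity and dividing by the nonzero integer $[k(s):k]$ yields $\sum_i d_i\,[k(s_i):k(s)] = d$, so the right-hand side equals $d\,\Tr\big|^{k(s)}_k(a)$, which is the left-hand side.

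The routine parts are the notation and the dimension count; the genuine input — and the only place I would be careful — is the invocation of transitivity of norms and the projection formula for the Milnor $K$-theory transfer, which I would cite from \cite{Bass-Tate} (or Kato), together with the check that the scheme-theoretic hypotheses (no embedded points, flatness, equal degree over $k$) really produce the clean field-degree relation $d\,[k(s):k] = \sum_i d_i\,[k(s_i):k]$ used above rather than merely an inequality.
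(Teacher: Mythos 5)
Your proof is correct and follows essentially the same route as the paper's: transitivity of the Milnor $K$-theory norm, the fact that norm composed with restriction is multiplication by the field degree, and the conversion of the equal-degree hypothesis into the identity $\sum_i d_i\,[k(s_i):k] = d\,[k(s):k]$. Your version is slightly more careful in spelling out why length times residue degree equals $k$-dimension, but the argument is the same.
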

\begin{proof}
Denote the degrees of field extensions 

\[p=[k(s) : k], p_i = [k(s_i) : k].\]

Then

\[\Tr\big|^{k(s_i)}_k = \Tr\big|^{k(s)}_k \circ \Tr\big|^{k(s_i)}_{k(s)}.\] 

Hence 

\begin{multline*}
\sum \limits_i  d_i \cdot \Tr\big|^{k(s_i)}_k ((\pi_i)^* a) = 
\sum \limits_i  d_i \cdot \Tr\big|^{k(s)}_k \left( \Tr\big|^{k(s_i)}_{k(s)} ((\pi_i)^* a) \right) =\\
\Tr\big|^{k(s)}_k \left( \sum \limits_i  d_i \cdot \Tr\big|^{k(s_i)}_{k(s)} ((\pi_i)^* a) \right)
\end{multline*}

For the Milnor $K$-theory, the composition of extension of scalars and the norm map is multiplication by the degree of the field extension, 

\[\Tr\big|^{k(s_i)}_{k(s)} ((\pi_i)^* a) = \frac{p_i}{p} a.\]

 Hence

\[
\Tr\big|^{k(s)}_k \left( \sum \limits_i d_i \cdot \Tr\big|^{k(s_i)}_{k(s)} ((\pi_i)^* a) \right)=
\Tr\big|^{k(s)}_k \left( \sum \limits_i d_i  \frac{p_i}{p} a \right) = 
\frac{1}{p} \cdot \sum \limits_i (d_i \cdot p_i)  \cdot \Tr\big|^{k(s)}_k a 
\]

Since $S$ and $\widetilde{S}$ have the same degree, $\sum \limits_i (d_i \cdot p_i) = d \cdot p.$ Hence 

\[
\frac{1}{p} \cdot \sum \limits_i (d_i \cdot p_i)  \cdot \Tr\big|^{k(s)}_k a  =
\frac{1}{p} \cdot d \cdot p \cdot \Tr\big|^{k(s)}_k a =
d \cdot \Tr\big|^{k(s)}_k a.
\]

\end{proof}

\end{proof}

Thus $\rho_m$ and $\sigma_m$ are mutually inverse isomorphisms, completing the proof of Theorem \ref{main}

\end{document}